\documentclass[12pt]{amsart}
\usepackage[all]{xy}
\usepackage{mathrsfs}
\usepackage[margin=1.0in]{geometry}

\usepackage{graphicx}
\usepackage[cyr]{aeguill}\usepackage[francais,english]{babel}
\usepackage{amsmath,amsfonts,amssymb}
\usepackage{color}
\usepackage[utf8]{inputenc}
\usepackage{comment}
\usepackage[shortlabels]{enumitem}
\usepackage{etoolbox}
\usepackage{float}
\usepackage{latexsym}
\usepackage{lipsum}
\usepackage{needspace}

\usepackage{pgf,tikz,pgfplots}
\usetikzlibrary{arrows,shapes}
\usepackage{hyperref}
\usepackage{relsize}

\newtheorem{theoremIntro}{Theorem}[]

\newtheorem{questionIntro}{Question}
\newtheorem{Openproblem}{Open Question}

\newtheorem{theorem}{Theorem}[section]
\newtheorem{lemma}[theorem]{Lemma}
\newtheorem{proposition}[theorem]{Proposition}

\newtheorem{definition}[theorem]{Definition}
\newtheorem{corollary}[theorem]{Corollary}

\theoremstyle{remark}
\newtheorem{remark}[theorem]{Remark}

\numberwithin{equation}{section}

\begin{document}

	\title{Flexibility and movability in Cayley graphs}

\author{Arindam Biswas}
\address{}
\curraddr{}
\email{arindam.biswas@univie.ac.at, arin.math@gmail.com}
\thanks{}

\subjclass[2010]{52C25, 05C15, 05C25, 05C38, 05C42, 70B15}
	
	\keywords{Cayley graphs, finite regular graphs, rigidity and flexibility of graphs, graph products}

	\begin{abstract}
		Let $\mathbf{\Gamma} = (V,E)$ be a (non-trivial) finite graph with $\lambda: E \rightarrow \mathbb{R}_{+}$, an edge labelling of $\mathbf{\Gamma}$. Let $\rho : V\rightarrow \mathbb{R}^{2}$ be a map which preserves the edge labelling. The graph $\mathbf{\Gamma}$ is said to be flexible if there exists an infinite number of such maps (upto equivalence by rigid transformations) and it is said to be movable if there exists an infinite number of injective maps. We study movability of Cayley graphs and construct regular movable graphs of all degrees. Further, we give explicit constructions of ``dense", movable graphs.
		
	\end{abstract}

	\maketitle
	
	\section{Introduction}
	In this article, we are interested in questions of rigidity, flexibility and movability of finite graphs. By definition,
	\begin{definition}[Graph]
			A graph $\mathbf{\Gamma} = (V,E)$ is a tuple where $V$ is an arbitrary set and $E \subseteq V\times V$, called respectively the set of vertices of $\mathbf{\Gamma}$ and the set of edges of $\mathbf{\Gamma}$. 
	\end{definition}
Strictly speaking $E$ is a \textit{multi-set} (when we have multiple edges between the same pair of vertices), but here we shall be dealing with simple graphs, so $E$ is a subset of $V\times V$. A graph is said to be finite if $|V| < +\infty$, undirected if $(u,v) = (v,u),\,\forall (u,v)\in E$ and without loops if $(v,v)\notin E, \,\,\forall v\in V$. From now on, the graphs we shall consider are non-trivial ($|E|> 0$), finite, simple, undirected, connected and without loops. Also, the graphs will be non-bipartite unless explicitly mentioned.
	
	Our aim is to study notions of rigidity, flexibility and movability in finite graphs. Intuitively, given a graph with fixed edge lengths, it is said to be rigid if none of its vertices can be moved without deforming at least one edge length. For the purpose of this article, the movement we shall consider, will be in the Euclidean space $\mathbb{R}^{2}$, but one can study the topic in other spaces as well. 
		
	Formally, we are given a finite graph $\mathbf{\Gamma}$ with $\lambda: E \rightarrow \mathbb{R}_{+}$, an edge labelling of it. Borrowing from the notation in \cite{GSS93}, we define,	
	\begin{definition}[Framework (in $m$-space)]
		 A framework (in $m$-space) is a triple $(V,E,\rho)$ (or sometimes by abuse of notation, the tuple $(\mathbf{\Gamma},\rho)$), where $\mathbf{\Gamma} =(V,E)$ is a graph and $\rho$ is a map (called the realisation map), 
		 $$ \rho:V \rightarrow \mathbb{R}^{m}.$$
	\end{definition}
	The framework $(V,E,\rho)$ is said to be injective if $\rho$ is injective and realisable if $\rho$ preserves the edge labelling $\lambda$, i.e., $$\| \rho(u) - \rho(v)\|_{m} = \lambda((u,v)), \,\forall (u,v)\in E$$
	where $\|x-y\|_{m}$ denotes the euclidean distance between two points $x,y \in \mathbb{R}^{m}$.
	
	Informally, the edge labelling signifies that the edges have positive lengths, while a realisable framework signifies that these edge lengths are preserved when we map the graph into $\mathbb{R}^{m}$ (hence distances between vertices in the original graph are fixed when we look at them in $\mathbb{R}^{m}$). It is clear that we can obtain infinitely many realizations from a given one by euclidean isometries (reflections, rotations and translations). Two realisations $\rho_{1}$ and $\rho_{2}$ are equivalent if there exist some direct euclidean isometry $\tau$ such that $\rho_{1} = \tau \rho_{2}$.  By the number of realisations of a given graph $\mathbf{\Gamma}$ (with edge labelling $\lambda$), we shall mean the number of equivalence classes of these realisation maps preserving the edge labelling $\lambda$. 
	
		\begin{definition}[rigid, flexible, movable]
		A realisable framework $(\mathbf{\Gamma}, \rho)$ in $\mathbb{R}^{m}$ is defined to be 
		\begin{enumerate}
			\item \textbf{Rigid} if the number of realisations of $\mathbf{\Gamma}$ is finite.
			\item \textbf{Flexible} if the number of realisations of $\mathbf{\Gamma}$ is infinite.
			\item \textbf{Movable} if the number of injective realisations of $\mathbf{\Gamma}$ is infinite.
		\end{enumerate}
	\end{definition}
	As pointed out in \cite{GLS17}, a realisation is not required to be injective as non-adjacent vertices can overlap. However, adjacent vertices have to be mapped to different points. This is because the edge lengths are positive by assumption.
	In \cite{GLS19} it was shown by Grasegger, Legersky and Schicho that every graph except the complete graph is movable in $\mathbb{R}^{3}$. We note that if a graph is movable in $\mathbb{R}^{m}$ then it is automatically movable in $\mathbb{R}^{n}$ where $n>m$. Therefore, it makes sense to restrict to the case $m=2$. In this article, we shall be dealing with equivalence classes of realisation maps in the space of realisable (but not necessarily injective) frameworks in $\mathbb{R}^{2}$.

	By abuse of notation, we shall frequently say that a graph is rigid, flexible or movable when we actually mean that the framework in $\mathbb{R}^{2}$ is rigid, flexible or movable respectively. 
	
	Recently, a combinatorial characterisation of flexible frameworks was given by Grasegger, Legersky and Schicho \cite{GLS17}]. Also, in \cite{GLS19}, they gave a necessary (combinatorial) criterion for movability which they showed is also sufficient for small graphs (the number of vertices should be $\leqslant 8$). 
	
	If the number of edges in the graph is small then the graph has a higher chance of being movable. A connected graph on $|V|$ vertices must have at least $|V|-1$ edges. It is moving.  At the other extreme, the complete (simple) graph on $|V|$ vertices with $\frac{|V|(|V|-1)}{2}$ edges is rigid. It simply has too many edges for it be moving. Henceforth, we shall call a sequence $n=1,2,\cdots,$ of finite simple graphs on $V_{n}$ vertices a \textit{dense sequence} of \emph{density $\alpha$} if they have more than $O(|V_{n}|^{\alpha})$ edges. We note that if $|E_{n}|$ denote the number of edges for each graph in the sequence, then $|V_{n}|-1 \leqslant |E_{n}|\leqslant \frac{|V_{n}|(|V_{n}|-1)}{2}$, i.e., $O(|V_{n}|) \leqslant |E_{n}|\leqslant O(|V_{n}|^{2}) \Rightarrow 1\leqslant \alpha \leqslant 2.$\\
	\\
	The questions which we address in this article are -
	\begin{questionIntro}\label{QIntro1}
		Are there a large class of finite graphs which are flexible and/or movable? Can we give general methods of construction of such graphs? How about movable graphs which are also regular\footnote{a graph is said to be $r$-regular (where $r\geqslant 1$ is an integer) if there are exactly $r$ half edges connected to each vertex.}?
	\end{questionIntro}

	We note that, if a finite graph $\mathbf{\Gamma}$ is movable then the subgraph obtained by deleting some edge(s) which form(s) some cycle(s) is also movable while if we add some outer edge (i.e., an edge which increases the number of vertices of $\mathbf{\Gamma}$ by one), then the new graph is also movable. This is a strong motivation to consider movable, regular graphs as deleting some edge(s) (which are part(s) of some cycles) from these graphs  will keep the movability property unchanged (and making the graphs irregular if we so wish).

	\begin{questionIntro}\label{QIntro2}
		 Given $V_{n}$ vertices, a movable graph with $|E_{n}|$ edges must satisfy $|V_{n}|-1\leqslant |E_{n}| < \frac{|V_{n}|(|V_{n}|-1)}{2}$. Can we construct dense sequences (with density $\alpha$) of finite, regular, movable graphs?
	\end{questionIntro}

	\subsection{Statement of Results}
	
	We show the following results -

	\begin{theoremIntro}[Movability of Cayley graphs - Theorem \ref{ThmCay}]\label{thmCay}
	Let $G$ be a finite group and $S$ be a generating set of $G$ without the identity. There exist certain general conditions on $S$, such that the undirected, simple Cayley graph $C(G,S)$ becomes flexible and also movable. 
	\end{theoremIntro}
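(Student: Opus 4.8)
The plan is to exploit a \emph{Cartesian product} structure on the Cayley graph induced by a direct product decomposition of the group. Concretely, the ``general conditions on $S$'' I would impose are the following: suppose $G \cong A \times B$ and the generating set splits compatibly as $S = (S_A \times \{e_B\}) \sqcup (\{e_A\} \times S_B)$, where $S_A \subseteq A$ and $S_B \subseteq B$ are symmetric generating sets of $A$ and $B$ respectively, neither empty and neither containing the identity. Under these hypotheses one checks directly from the definition of adjacency in a Cayley graph that
$$C(G,S) \;\cong\; C(A,S_A)\,\square\,C(B,S_B),$$
the Cartesian product, in which $(a,b)$ and $(a',b')$ are adjacent precisely when $a=a'$ and $b \sim b'$, or $b=b'$ and $a \sim a'$. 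The first step is thus to record this isomorphism and to verify that the hypotheses (nonempty $S_A, S_B$) guarantee a genuine, nontrivial product in which both factors carry at least one edge.

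Second, I would produce an explicit one-parameter flex on any such product, which establishes flexibility directly rather than through the combinatorial NAC-coloring criterion of \cite{GLS17}. Fix any realisations $p : A \to \mathbb{R}^{2}$ of $C(A,S_A)$ and $q : B \to \mathbb{R}^{2}$ of $C(B,S_B)$ with respect to the induced edge labellings, and for $\theta \in [0,2\pi)$ let $R_\theta$ be the rotation of $\mathbb{R}^{2}$ by angle $\theta$. Define
$$\rho_\theta(a,b) \;=\; p(a) + R_\theta\, q(b).$$
For an edge coming from $S_A$ the endpoints share the same $b$, so their distance is $\|p(a)-p(a')\|$, independent of $\theta$; for an edge coming from $S_B$ the endpoints share the same $a$, so their distance is $\|R_\theta q(b) - R_\theta q(b')\| = \|q(b)-q(b')\|$, again independent of $\theta$. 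Hence every $\rho_\theta$ is realisable, and since varying $\theta$ cannot be undone by a single global isometry (provided $p$ and $q$ are chosen in general position), the family $\{\rho_\theta\}$ descends to infinitely many distinct equivalence classes, proving flexibility.

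Third, for movability I would upgrade this to injectivity. Two vertices collide under $\rho_\theta$ exactly when $p(a)-p(a') = R_\theta\bigl(q(b')-q(b)\bigr)$; for each fixed pair of distinct vertices this is a single equation in $\theta$ with at most finitely many solutions, and it has no solution at all once $p$ and $q$ are chosen so that no difference vector of $p$ has the same length as a difference vector of $q$. Taking the union over the finitely many vertex pairs leaves all but finitely many $\theta$ yielding an injective realisation, so $C(G,S)$ has infinitely many injective realisations and is movable. The same computation shows the product is $(|S_A|+|S_B|)$-regular, which is exactly the regularity and degree control feeding the later density statements.

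The step I expect to be the main obstacle is the movability (injectivity) argument: flexibility via the rotation trick is immediate, but controlling collisions requires choosing the base realisations $p$ and $q$ carefully so that the exceptional set of parameters $\theta$ is finite and nonexhaustive, and one must confirm that such generic $p,q$ exist for the given $S_A, S_B$ (equivalently, that $C(A,S_A)$ and $C(B,S_B)$ admit realisations whose difference-vector lengths can be made pairwise distinct). Verifying this robustly, rather than for a single lucky configuration, and stating the conditions on $S$ so that it always succeeds, is where the real work lies.
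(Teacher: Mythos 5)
Your proposal is correct in substance but takes a genuinely different route from the paper. The paper's proof (Theorem \ref{ThmCay}) is combinatorial: it colors the edges coming from a chosen $s\in S$ (or from one part $S_{1}$ of a partition of $S$) blue and all remaining edges red, and shows that the subgroup conditions $\langle s\rangle\cap\big(S\setminus\{s,s^{-1}\}\big)=\emptyset$ and $\{s,s^{-1}\}\cap\langle S\setminus\{s,s^{-1}\}\rangle=\emptyset$ (resp.\ $\langle s\rangle\cap\langle S\setminus\{s,s^{-1}\}\rangle=\{e\}$) make this coloring a NAC-coloring (resp.\ a good NAC-coloring); flexibility and movability then follow from the criteria of \cite{GLS17} and \cite{GLS19}. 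Your hypotheses --- $G\cong A\times B$ with $S=(S_{A}\times\{e_{B}\})\sqcup(\{e_{A}\}\times S_{B})$ --- are a strict special case of the paper's: taking $S_{1}=S_{A}\times\{e_{B}\}$ gives $\langle S_{1}\rangle\cap\langle S\setminus S_{1}\rangle=\{e\}$, but the converse fails, since the paper never requires the two subgroups to be normal or their product to be all of $G$. That extra generality is exactly what the paper exploits later: in Theorem \ref{DensestMovCay} the two color classes generate the upper and lower triangular subgroups of $SL_{n}(\mathbb{F}_{p})$, which intersect trivially yet do not decompose $SL_{n}(\mathbb{F}_{p})$ as a direct product, and Corollary \ref{CorA1} is likewise outside your hypotheses. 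What your route buys is independence from the NAC machinery: the rotation flex $\rho_{\theta}(a,b)=p(a)+R_{\theta}\,q(b)$ is an explicit geometric argument, and it amounts to a direct proof of the paper's Proposition \ref{CartesianProdProp} (Cartesian products of graphs are movable), which the paper again proves combinatorially. Two details you flag as the hard part are in fact routine: for injectivity, take any injective $p,q$ and rescale $q$ by a generic factor so that no difference length of $p$ equals any difference length of $q$ --- then \emph{every} $\rho_{\theta}$ is injective, not just all but finitely many; and for non-equivalence of the $\rho_{\theta}$ you need only that $p$ and $q$ are nonconstant, since differencing along each factor forces the rotational part of any direct isometry carrying $\rho_{\theta}$ to $\rho_{\theta'}$ to be the identity, whence $\theta=\theta'$. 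Because Theorem \ref{thmCay} asserts only the existence of ``certain general conditions,'' your narrower conditions do prove the literal statement, though they could not support the paper's subsequent non-abelian applications.
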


	 From the above conditions, one can deduce the following corollary:
\begin{corollary}\label{CorA1}
	The Cayley graph $C(G,S)$ with respect to a set of generators $S$ having the property that $\langle s_{i}\rangle \cap \langle s_{j}\rangle = \lbrace e \rbrace \,\forall s_{i},s_{j} \in S$ with $i\neq j$ is always movable..
\end{corollary}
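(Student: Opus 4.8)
The plan is to deduce Corollary \ref{CorA1} by verifying, for a generating set $S$ of this type, the hypotheses of Theorem \ref{thmCay}. The natural first step is to edge-colour the Cayley graph $C(G,S)$ by generators, so that the edge $\{g,gs\}$ receives the colour of the pair $\{s,s^{-1}\}$. Under this colouring the monochromatic cycles of colour $i$ are exactly the cosets $g\langle s_i\rangle$ of the cyclic subgroup $\langle s_i\rangle$, each of which is realised rigidly as a regular $|s_i|$-gon (an edge when $|s_i|=2$). Thus the edge-set decomposes into rigid polygonal gadgets, and the task reduces to assembling these gadgets into an infinite, injective one-parameter family.

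The key structural input supplied by the hypothesis $\langle s_i\rangle\cap\langle s_j\rangle=\{e\}$ for $i\neq j$ is the following \emph{incidence lemma}, which I would establish first: any two monochromatic cycles of distinct colours share at most one vertex. Indeed, such cycles are cosets $g\langle s_i\rangle$ and $h\langle s_j\rangle$, and their intersection is either empty or a coset of $\langle s_i\rangle\cap\langle s_j\rangle=\{e\}$, hence a single point. This is precisely the non-degeneracy that the conditions of Theorem \ref{thmCay} demand: polygons of colours $i$ and $j$ are glued only at isolated joints, so no relation of the form $s_i^{\pm1}=(\text{a word in the remaining generators lying in a single }\langle s_j\rangle)$ can force an almost-monochromatic cycle. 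The interaction between different generators is therefore confined to single cut-points, which is what allows the gadgets to move.

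With the incidence lemma in hand, the final step is to produce the flex and check movability. I would realise each coset-polygon rigidly and use the single-vertex gluings to propagate a rotation parameter $\theta$ through the graph, obtaining a one-parameter family $\rho_\theta$ of realisations preserving every edge label by construction. The main obstacle, and the heart of the argument, is twofold: (i) global consistency, i.e.\ checking that $\rho_\theta$ closes up coherently around the non-monochromatic cycles (the remaining relations of $G$), for which the at-most-one-shared-vertex property is exactly the slack needed so that such cycles carry at least two edges of two colours and hence impose no rigidifying constraint; and (ii) injectivity, i.e.\ ensuring that for infinitely many $\theta$ no two non-adjacent vertices collide. For (ii) I expect a genericity argument to suffice, since collisions occur only on a proper subvariety of the parameter and the single-vertex overlap of distinct polygons prevents the colliding configurations from filling the whole family. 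Verifying (i) cleanly is where I anticipate the real work, as it is there that the pairwise triviality of the cyclic intersections must be converted into the global movability asserted by Theorem \ref{thmCay}.
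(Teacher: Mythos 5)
Your reduction to Theorem~\ref{ThmCay} breaks precisely at the step where you assert that your incidence lemma is ``precisely the non-degeneracy that the conditions of Theorem~\ref{thmCay} demand.'' It is not. The conditions of Theorem~\ref{ThmCay}(2)--(3) are \emph{joint} conditions: one needs $\langle s\rangle\cap\bigl\langle S\setminus\lbrace s,s^{-1}\rbrace\bigr\rangle=\lbrace e\rbrace$, the intersection of one cyclic subgroup with the subgroup generated by \emph{all} the remaining generators together, because in the underlying two-colouring the red components are cosets of $\bigl\langle S\setminus\lbrace s,s^{-1}\rbrace\bigr\rangle$, not of a single $\langle s_j\rangle$. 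The hypothesis of Corollary~\ref{CorA1}, and your lemma, control only the pairwise intersections $\langle s_i\rangle\cap\langle s_j\rangle$, and pairwise does not imply joint: in the symmetric group $\mathfrak{S}_{3}$ with $S$ the three transpositions, all pairwise intersections are trivial, yet $(12)=(13)(23)(13)\in\langle (13),(23)\rangle$; the colouring behind Theorem~\ref{ThmCay} (one generator pair blue, the rest red) then has the almost-red cycle $e,\;(13),\;(23)(13),\;(12),\;e$, so it is not even a NAC-colouring. Your parenthetical restriction to words ``lying in a single $\langle s_j\rangle$'' is exactly the misreading: the cycles and coincident red--blue paths that must be excluded arise from arbitrary words in the remaining generators, about which your lemma says nothing. (For what it is worth, the paper's own one-line proof---colour $s_1^{\pm1}$ blue, the rest red, ``we get the result''---makes the same silent leap; you have reproduced its gap rather than closed it.)

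Moreover the gap cannot be repaired, because the statement is false as written, and this also destroys the mechanical half of your plan. Take $G=(\mathbb{Z}/2\mathbb{Z})^{2}$ and $S=\lbrace (1,0),(0,1),(1,1)\rbrace$: each $\langle s_i\rangle$ has order two and all pairwise intersections are trivial, so the hypothesis of Corollary~\ref{CorA1} holds, but $C(G,S)=K_{4}$, which is rigid (the paper itself records that complete graphs are rigid, and that they cannot satisfy the conditions of Theorem~\ref{ThmCay}---showing the pairwise and joint conditions are genuinely inequivalent). Note that your incidence lemma holds in this example: every coset ``polygon'' is a single edge and any two of different colours share at most one vertex; hence that lemma cannot possibly imply movability. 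The reason your rotation-propagation picture fails is that single-vertex gluing gives freedom only when the polygons are assembled tree-like, whereas in a finite group there are always relations beyond the cyclic ones, so the polygons close up into cycles of polygons and the rotation parameters become globally constrained---in $K_4$, constrained to rigidity. Thus the two items you defer as remaining work, global consistency and injectivity, are not technical clean-up but the actual content, and they are unobtainable under the stated hypothesis; a correct statement needs the joint hypothesis $\langle s_{i}\rangle\cap\bigl\langle S\setminus\lbrace s_{i},s_{i}^{-1}\rbrace\bigr\rangle=\lbrace e\rbrace$ for some $i$, which is what Theorem~\ref{ThmCay} actually uses.
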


	Also, one can construct explicitly, movable graphs of all regularity in both the families of finite abelian groups and the finite non-abelian groups.
	\begin{theoremIntro}[Theorem \ref{MovRegAb}]
		There exist movable graphs of all regularity coming from abelian groups. Conversely, given any finite abelian group, there exist a symmetric generating set with respect to which its Cayley graph is movable.
	\end{theoremIntro}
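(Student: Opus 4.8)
The plan is to reduce both assertions to the sufficient condition supplied by Corollary \ref{CorA1}. The key elementary observation is that if $S=S^{-1}$ is a symmetric generating set with $e\notin S$, then $C(G,S)$ is a simple graph which is $|S|$-regular; hence realising a prescribed regularity $r$ amounts to exhibiting an abelian group together with a symmetric generating set of cardinality exactly $r$ whose \emph{distinct} cyclic subgroups meet pairwise trivially. Writing each $s\in S$ either as an involution ($s=s^{-1}$, contributing one to $|S|$) or as one half of an inverse pair $\{s,s^{-1}\}$ (which generates a single cyclic subgroup but contributes two to $|S|$), we see that the parity of $r$ is governed by the number of involutions in $S$; this will dictate our choice of group.

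I would begin with the converse statement, as it is the cleaner of the two. By the fundamental theorem of finite abelian groups we may write
$$ G \cong \mathbb{Z}_{n_{1}} \times \mathbb{Z}_{n_{2}} \times \cdots \times \mathbb{Z}_{n_{k}}. $$
Let $e_{i}$ denote the canonical generator of the $i$-th factor and take $S=\{\,e_{i}^{\pm 1} : 1\leqslant i\leqslant k\,\}$, a symmetric, identity-free set. This $S$ generates $G$, so $C(G,S)$ is connected, and the distinct cyclic subgroups arising from $S$ are exactly the coordinate subgroups $\langle e_{1}\rangle,\dots,\langle e_{k}\rangle$ (the inverses $e_i^{-1}$ generating the same subgroups), which by construction satisfy $\langle e_{i}\rangle \cap \langle e_{j}\rangle = \{e\}$ for $i\neq j$. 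Corollary \ref{CorA1} then yields movability of $C(G,S)$, which proves the converse.

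For the first assertion I would manufacture, for each admissible regularity $r$, an abelian group whose standard generators satisfy the hypothesis of Corollary \ref{CorA1}. Fix an odd integer $n\geqslant 3$ (odd so as to keep the resulting graph non-bipartite). If $r=2k$ is even I would take $G=\mathbb{Z}_{n}^{\,k}$ with $S=\{\,e_{i}^{\pm 1} : 1\leqslant i\leqslant k\,\}$; since $n\geqslant 3$ the elements $e_{i},e_{i}^{-1}$ are pairwise distinct and nontrivial, so $|S|=2k=r$, and the coordinate subgroups intersect trivially. If $r=2k+1$ is odd I would instead take $G=\mathbb{Z}_{n}^{\,k}\times \mathbb{Z}_{2}$ and adjoin to the previous generators the single involution $\iota$ generating the $\mathbb{Z}_{2}$-factor, so that $|S|=2k+1=r$; again the $k+1$ distinct cyclic subgroups $\langle e_{1}\rangle,\dots,\langle e_{k}\rangle,\langle \iota\rangle$ are the coordinate subgroups and meet pairwise trivially. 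In both cases Corollary \ref{CorA1} gives a movable, $r$-regular Cayley graph of an abelian group, establishing the existence of movable graphs of every (sufficiently large) regularity.

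The delicate points are the small and degenerate cases, which is where I expect the real work to lie. The construction above has genuine content only once at least two coordinate subgroups are present; the extreme values $r=1$ (forcing $G=\mathbb{Z}_{2}$ and $C(G,S)=K_{2}$, which is rigid) and $r=2$ (forcing a single cyclic factor, so that $C(\mathbb{Z}_{n},\{1,-1\})$ is the cycle $C_{n}$) lie outside the scope of Corollary \ref{CorA1}, whose intersection hypothesis is vacuous there. The case $r=2$ must be handled by hand: for $n\geqslant 4$ the polygon $C_{n}$ is a flexible linkage admitting a one-parameter family of injective realisations and so is movable, which can be derived either directly or from the general criterion of Theorem \ref{thmCay}, while the triangle $C_{3}=K_{3}$ and the edge $K_{2}$ are genuinely rigid and must be excluded. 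A parallel caveat applies to the converse for the smallest groups (for instance $\mathbb{Z}_{3}$, whose only symmetric generating set yields $K_{3}$): there one must either invoke the finer conditions of Theorem \ref{thmCay} or restrict to groups admitting a non-complete Cayley graph. Finally I would record the routine checks that each $S$ is symmetric, identity-free and generating, and that the choice of $n$ odd keeps the graphs simple and non-bipartite, in accordance with the standing conventions.
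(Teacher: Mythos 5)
Your proposal is correct and follows essentially the same route as the paper: both arguments build Cayley graphs of direct products of cyclic groups with the coordinate generating set, color one inverse pair blue and the rest red, and deduce movability from trivial intersection of the corresponding subgroups. Your even/odd split via $\mathbb{Z}_n^{\,k}$ and $\mathbb{Z}_n^{\,k}\times\mathbb{Z}_2$ mirrors parts (1)--(3) of Theorem \ref{MovRegAb}, and your converse (fundamental theorem of finite abelian groups plus coordinate generators) is exactly the paper's.

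Two points of comparison are worth recording. First, you funnel everything through Corollary \ref{CorA1}, whose hypothesis is only the pairwise condition $\langle s_i\rangle\cap\langle s_j\rangle=\{e\}$. This is strictly weaker than what the underlying coloring argument (Lemma \ref{keyLem2}, or Theorem \ref{ThmCay}(2)) actually needs, namely $\langle s_1\rangle\cap\big\langle S\setminus\{s_1,s_1^{-1}\}\big\rangle=\{e\}$, and the pairwise condition is insufficient in general: in $\mathbb{Z}_2\times\mathbb{Z}_2$ the three non-identity elements generate pairwise trivially intersecting subgroups, yet the resulting Cayley graph is $K_4$, which is rigid. Your constructions are unaffected, because coordinate subgroups of a direct product satisfy the stronger one-against-the-rest condition --- which is precisely what the paper verifies (``clear from the fact that we have a direct product'') --- but you should verify and cite that condition rather than the pairwise one. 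Second, your handling of the degenerate cases is a genuine improvement over the printed proof, which glosses over them: with a single generator pair the proposed two-coloring is not even surjective (so the $2$-regular case really does need the cycle-graph argument of Theorem \ref{ThmCay}(1), as you say), the value $r=1$ must be excluded since a connected $1$-regular graph is $K_2$, and the converse as literally stated fails for $\mathbb{Z}_2$ and $\mathbb{Z}_3$, whose only admissible symmetric generating sets yield the rigid graphs $K_2$ and $K_3$.
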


For questions of density, we establish the movability of cartesian product of graphs - Proposition \ref{CartesianProdProp} and from there construct the following dense sequences of graphs in abelian and non-abelian groups.

	\begin{theoremIntro}[Dense Cayley graphs of abelian groups - Theorem \ref{DenseMovCay}]\label{ThmC}
	For each $\alpha \in [1,2]$, there exist dense sequences of movable Cayley graphs of abelian groups of density $\alpha$. Thus, asymptotically the densest possible sequence of moving graphs  (corresponding to $\alpha = 2$) can be attained.
	\end{theoremIntro}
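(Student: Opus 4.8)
The plan is to play movability and density off against different factors of a Cartesian product, using Proposition~\ref{CartesianProdProp}: a single movable factor already forces the whole product to be movable, no matter how dense the remaining factor is. For an abelian Cayley graph $C(G,S)$ we have $|V|=|G|$ and $|E|=\tfrac12|G|\,|S|$, so ``density $\alpha$'' means precisely $|S_n|=\Theta(|G_n|^{\alpha-1})$; rather than build such a generating set directly (which is delicate, since large symmetric generating sets tend to create rigidity), I would take products $\Gamma_n=M\,\square\,H_n$ in which a fixed small movable graph $M$ supplies the flex and a dense --- even rigid --- abelian Cayley graph $H_n$ supplies the edges.

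Concretely, fix $M=C(\mathbb{Z}/4\mathbb{Z},\{\pm1\})$, the $4$-cycle, which is movable. Given a target $\alpha\in[1,2]$, take $H_n=C(\mathbb{Z}/N_n\mathbb{Z},\{\pm1,\pm2,\dots,\pm k_n\})$ with $N_n\to\infty$ and $k_n=\max\{2,\lceil N_n^{\alpha-1}\rceil\}$, so that $|E(H_n)|=k_nN_n=\Theta(N_n^{\alpha})$; for the extreme case $\alpha=2$ one may simply take the (rigid) complete graph $H_n=K_{N_n}=C(\mathbb{Z}/N_n\mathbb{Z},\mathbb{Z}/N_n\mathbb{Z}\setminus\{0\})$. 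Because $C(A,S_A)\,\square\,C(B,S_B)=C(A\times B,(S_A\times\{e\})\cup(\{e\}\times S_B))$, each $\Gamma_n$ is again an abelian Cayley graph, connected (its generators generate) and non-bipartite (the choice $k_n\geq2$ puts a triangle into $H_n$, which persists in the product). A direct count gives $|V(\Gamma_n)|=4N_n$ and $|E(\Gamma_n)|=4N_n+4\,|E(H_n)|=\Theta(N_n^{\alpha})=\Theta(|V(\Gamma_n)|^{\alpha})$, so the sequence has density exactly $\alpha$, with $\alpha=2$ attaining the asymptotic maximum.

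It remains to certify movability of each $\Gamma_n$, which is the content of Proposition~\ref{CartesianProdProp}; the mechanism I would use is explicit. If $\rho_t\colon V(M)\to\mathbb{R}^2$ is an injective flex of $M$ and $p\colon V(H_n)\to\mathbb{R}^2$ is any fixed injective realisation of $H_n$, put $\sigma_t(m,h)=\rho_t(m)+p(h)$: the $M$-edges retain their lengths since $p$ is constant along $M$-fibres, and the $H_n$-edges retain theirs since $\rho_t$ is constant along $H_n$-fibres, so every $\sigma_t$ is a realisation and distinct $t$ give inequivalent ones. The hard part will be upgrading flexibility to \emph{movability}, i.e.\ injectivity of $\sigma_t$: a collision $\sigma_t(m,h)=\sigma_t(m',h')$ means $\rho_t(m)-\rho_t(m')=p(h')-p(h)$, which for $h=h'$ is ruled out by injectivity of $\rho_t$, and for $h\neq h'$ is a single analytic equation in $t$. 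As $t$ ranges over an interval while there are only finitely many pairs $((m,h),(m',h'))$, all but finitely many $t$ avoid every collision, so infinitely many $\sigma_t$ are injective; I expect this transversality step --- choosing $p$ and the flex parameter off the finitely many degeneracies --- to be the sole delicate point. Running this for every $\alpha\in[1,2]$ yields dense sequences of movable abelian Cayley graphs of each density $\alpha$, with $\alpha=2$ realising the densest possible regime, which proves Theorem~\ref{DenseMovCay}. (One could instead draw the dense factor from Theorem~\ref{MovRegAb} or Corollary~\ref{CorA1}, but allowing a rigid $H_n$ such as $K_{N_n}$ keeps the edge count cleanest.)
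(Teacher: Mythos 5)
Your proposal is correct, and it rests on the same underlying mechanism as the paper's proof: realise the graph as the Cayley graph of a direct product of abelian groups with a generating set split along the two factors, so that colouring edges by factor gives a good NAC coloring --- this is exactly Proposition~\ref{CartesianProdProp}, and the Remark following Theorem~\ref{DenseMovCay} explicitly identifies the paper's construction with such a Cartesian product. The difference is in the choice of factors. The paper makes \emph{both} factors complete Cayley graphs, taking $G=(\mathbb{Z}/n\mathbb{Z})\times(\mathbb{Z}/n^{k}\mathbb{Z})$ with the two punctured coordinate axes as generators, verifies the subgroup-intersection condition by hand, and tunes density through the relative sizes of the two factors; as written this produces $\alpha=3/2$ (the case $k=1$), the densities $2-\tfrac{1}{k+1}$ for fixed $k$, and the limiting case $\alpha=2$, with the remaining densities only implicit in the Remark's generalisation to $(\mathbb{Z}/m\mathbb{Z})\times(\mathbb{Z}/n^{k}\mathbb{Z})$. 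You instead fix one small movable factor (the $4$-cycle $C(\mathbb{Z}/4\mathbb{Z},\{\pm1\})$) and concentrate all the density in a circulant factor of degree $\Theta(N^{\alpha-1})$, using the identity $C(A,S_A)\boxdot C(B,S_B)=C(A\times B,(S_A\times\{e\})\cup(\{e\}\times S_B))$ to stay inside the class of abelian Cayley graphs. This decouples the source of movability from the source of density (a rigid dense factor costs nothing, by Proposition~\ref{CartesianProdProp}), and it hits every $\alpha\in[1,2]$ exactly rather than a countable set of densities plus the endpoint --- on that point your argument is actually more complete than the paper's.

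One caveat. Your closing analytic argument (the flex $\sigma_t(m,h)=\rho_t(m)+p(h)$ together with a transversality count) is superfluous once Proposition~\ref{CartesianProdProp} is invoked, and as stated it has a gap: for the standard pinned parametrisation of the rhombus flex, the differences $\rho_t(v_2)-\rho_t(v_1)$ and $\rho_t(v_3)-\rho_t(v_4)$ are \emph{constant} in $t$, so the corresponding collision equations are not analytic equations with finitely many roots --- they hold either never or identically, depending on $p$. One must first adjust $p$ (e.g.\ rotate it so that no difference $p(h')-p(h)$ equals those finitely many constant vectors) before the ``all but finitely many $t$'' conclusion is valid; your parenthetical remark about degeneracies anticipates this but does not dispatch it. Since movability of the product is already guaranteed by the cited proposition, this does not affect the correctness of your proof of the theorem.
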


\begin{theoremIntro}[Dense Cayley graphs of non-abelian groups - Theorem \ref{DensestMovCay}]\label{ThmD}
	For each $\alpha \in [1,2]$, there exist dense sequences of movable Cayley graphs of non-abelian groups of density $\alpha$. 
\end{theoremIntro}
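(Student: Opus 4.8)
The plan is to bootstrap the abelian construction of Theorem \ref{DenseMovCay} to the non-abelian setting by taking a single direct product with a fixed non-abelian factor, exploiting the fact that the Cayley graph of a direct product, with respect to the natural union of the two generating sets, is precisely the Cartesian product of the two individual Cayley graphs. Since Proposition \ref{CartesianProdProp} preserves movability under Cartesian products and a fixed factor only rescales vertex and edge counts by constants, the density $\alpha$ should survive the passage from abelian to non-abelian.

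First I would fix, once and for all, a finite non-abelian group $H$ together with a symmetric, identity-free generating set $T$ for which $C(H,T)$ is movable; the existence of such a pair is guaranteed by Theorem \ref{ThmCay} (one may, for instance, take a non-abelian group equipped with a generating set satisfying the cyclic-intersection hypothesis of Corollary \ref{CorA1}). Next, given $\alpha \in [1,2]$, I would invoke Theorem \ref{DenseMovCay} to produce a dense sequence of movable Cayley graphs $C(A_n, S_n)$ of abelian groups $A_n$, with symmetric, identity-free generating sets $S_n$, of density $\alpha$, so that $|E(C(A_n,S_n))| = \Theta(|A_n|^{\alpha})$.

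The core step is to set $G_n := A_n \times H$ and $\widehat{S}_n := (S_n \times \{e_H\}) \cup (\{e_{A_n}\} \times T)$. Since $H$ is non-abelian, so is every $G_n$; the set $\widehat{S}_n$ is symmetric and avoids the identity because both $S_n$ and $T$ are; and a short verification shows that $C(G_n, \widehat{S}_n)$ is isomorphic to the Cartesian product $C(A_n,S_n)\,\square\,C(H,T)$, as adjacency in the former reads off exactly as an $S_n$-move in the first coordinate or a $T$-move in the second. Applying Proposition \ref{CartesianProdProp} to these two movable factors then yields that each $C(G_n,\widehat{S}_n)$ is movable.

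It remains to track the density, which is the only genuine bookkeeping. Writing $N_n = |V(G_n)| = |A_n|\cdot|H|$ and using the standard edge count for Cartesian products, $|E(G_n)| = |A_n|\,|E(C(H,T))| + |H|\,|E(C(A_n,S_n))|$. Because $|H|$ and $|E(C(H,T))|$ are constants independent of $n$ while $|E(C(A_n,S_n))| = \Theta(|A_n|^{\alpha})$ with $\alpha \geqslant 1$, the second term dominates and $|E(G_n)| = \Theta(|A_n|^{\alpha}) = \Theta(N_n^{\alpha})$, so $\{C(G_n,\widehat{S}_n)\}$ is a dense sequence of movable Cayley graphs of non-abelian groups of density $\alpha$. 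I anticipate that the main obstacle is not conceptual but lies in (i) confirming the Cayley-graph / Cartesian-product identification with the correct symmetric generating set, and (ii) handling the endpoint $\alpha = 2$, where one must ensure the abelian factor's degree already grows linearly in $|A_n|$ so that multiplying the vertex count by the constant $|H|$ does not drop the density below $2$; both are settled by the observations above.
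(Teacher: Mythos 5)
Your proposal is correct, but it takes a genuinely different route from the paper. The paper proves Theorem \ref{ThmD} entirely inside special linear groups: part (1) of Theorem \ref{DensestMovCay} uses elementary-matrix generators (bounded degree), part (2) takes the full upper- and lower-unipotent subgroups as the generating set --- colored blue and red, a good NAC coloring because the two triangular subgroups intersect trivially --- to reach density $3/2$, and part (3) reaches density $2$ by taking the Cartesian product of the complete Cayley graphs of $SL_{n}(\mathbb{F}_{p})$ and $SL_{n^{k}}(\mathbb{F}_{p})$, with $n,p$ fixed and $k\rightarrow\infty$. Your construction shares the key mechanism of the paper's part (3) --- the Cayley graph of a direct product, with respect to the union of the factors' generating sets, is exactly the Cartesian product of the factors' Cayley graphs, hence movable by Proposition \ref{CartesianProdProp} --- but you choose different factors: a growing abelian group $A_{n}$ supplied by Theorem \ref{DenseMovCay}, crossed with one fixed non-abelian $H$. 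This buys economy and uniformity: the non-abelian statement reduces to the abelian one plus Proposition \ref{CartesianProdProp} in a single stroke, for all $\alpha\in[1,2]$ simultaneously, and your density bookkeeping (the fixed factor $|H|$ only rescales constants, and $\alpha\geqslant 1$ makes the abelian term dominate) is sound, including at the endpoint $\alpha=2$. What it gives up is the substance of the paper's examples: your groups $A_{n}\times H$ are non-abelian only by virtue of a bounded direct factor, whereas the paper's graphs live on genuinely non-abelian groups close to finite simple ones, and its intrinsic $SL_{n}$ constructions are also what feed Corollary \ref{CorA2} (movable regular graphs of all degrees from non-abelian groups) and the expander examples in the concluding section. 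One simplification you overlooked: the movability of $C(H,T)$ is never needed, since Proposition \ref{CartesianProdProp} makes the Cartesian product of \emph{any} two finite simple graphs movable; thus any fixed non-abelian $H$ with any symmetric, identity-free generating set works, and your appeal to Theorem \ref{ThmCay} and Corollary \ref{CorA1} can be dropped.
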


As a corollary of the method used to show Theorem \ref{ThmD} we get that
\begin{corollary}\label{CorA2}
		There exist movable graphs of all regularity coming from non-abelian groups.
\end{corollary}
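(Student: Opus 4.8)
The plan is to recycle the building blocks behind Theorem \ref{DensestMovCay}: a fixed non-abelian movable Cayley graph together with cyclic ``padding'' factors, assembled via direct products of groups (equivalently, cartesian products of the associated Cayley graphs, as in Proposition \ref{CartesianProdProp}). The guiding observation is that the regularity of an undirected Cayley graph $C(G,S)$ equals $|S|$, and that under a direct product $G = G_1\times G_2$ with generating set $S = (S_1\times\{e\})\sqcup(\{e\}\times S_2)$ the regularity is additive, $|S| = |S_1|+|S_2|$, while the graph is exactly the cartesian product of $C(G_1,S_1)$ and $C(G_2,S_2)$. Thus it suffices to produce, for each target regularity, a non-abelian group with a symmetric generating set of the prescribed size whose generators satisfy the hypothesis of Corollary \ref{CorA1}, namely that the distinct cyclic subgroups they generate pairwise intersect in $\{e\}$.

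First I would fix the non-abelian base. Take $G_0 = S_3$ with the symmetric generating set $S_0 = \{a, a^{2}, b\}$, where $a$ is a $3$-cycle and $b$ a transposition. The only distinct cyclic subgroups generated are $\langle a\rangle$ (order $3$) and $\langle b\rangle$ (order $2$), and $\langle a\rangle\cap\langle b\rangle = \{e\}$, so Corollary \ref{CorA1} applies: $C(S_3, S_0)$ is movable. It is non-abelian, connected, $3$-regular, and contains the triangle $\{e,a,a^{2}\}$, hence is non-bipartite. Next I would pad the degree using cyclic factors exactly as the density construction does. For a target regularity $r\geq 3$ set $G = S_3\times \mathbb{Z}_{n_1}\times\cdots\times\mathbb{Z}_{n_k}$, with $S_0$ embedded in the first coordinate together with, for each cyclic factor, either the single involution when $n_i=2$ or the inverse pair $\{g_i,g_i^{-1}\}$ when $n_i\geq 3$. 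Each $\mathbb{Z}_2$ contributes $1$ to the degree and each $\mathbb{Z}_{\geq 3}$ contributes $2$, so an appropriate choice of factors realises every $r\geq 3$ (for instance $(r-3)/2$ copies of $\mathbb{Z}_3$ when $r$ is odd, or one $\mathbb{Z}_2$ together with $(r-4)/2$ copies of $\mathbb{Z}_3$ when $r$ is even).

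Finally I would verify that the construction retains all the required properties. The group is non-abelian since it contains $S_3$; the generating set generates $G$ because each coordinate subgroup is generated, so the graph is connected; and it is non-bipartite because the cartesian product retains an odd cycle from the $S_3$ factor. For movability I would invoke Corollary \ref{CorA1}: the distinct cyclic subgroups generated by $S$ are $\langle a\rangle$ and $\langle b\rangle$ in the $S_3$ coordinate and one subgroup $\langle g_i\rangle$ per cyclic coordinate, and subgroups living in different coordinates of a direct product meet only in $\{e\}$, while $\langle a\rangle\cap\langle b\rangle=\{e\}$ as above; hence the pairwise-trivial-intersection hypothesis holds. (Equivalently, one writes $C(G,S)$ as the cartesian product of $C(S_3,S_0)$ with movable cyclic Cayley graphs and applies Proposition \ref{CartesianProdProp}.)

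The hard part will not be the construction itself, which is mechanical once Corollary \ref{CorA1} is available, but reconciling the standing conventions (connected, simple, non-bipartite) with the smallest regularities. Indeed a connected $1$-regular graph is $K_2 = C(\mathbb{Z}_2,\{1\})$, which is abelian, and a connected $2$-regular non-abelian Cayley graph is forced by the two-involutions/dihedral dichotomy to be an even cycle, hence bipartite; so $r=1,2$ genuinely fall outside the scope and the statement should be read as covering all $r\geq 3$. The remaining subtlety is simply ensuring the base non-abelian group has generators whose cyclic subgroups meet trivially, which $S_3$ supplies because its two generating subgroups have coprime orders $2$ and $3$.
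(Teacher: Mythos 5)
Your construction is correct, but it takes a genuinely different route from the paper's. The paper never leaves the family $SL_{\rho+1}(\mathbb{F}_{2})$: over $\mathbb{F}_{2}$ each elementary matrix $E_{i,j}$ is an involution, so the set $S=\lbrace E_{i,i+1},E_{i+1,i} : 1\leqslant i\leqslant \rho\rbrace$ is already symmetric with $|S|=2\rho$, which realises every even regularity, and odd regularity $2\rho+1$ is obtained by adjoining the further involution $E_{1,3}$; movability in both cases is inherited from the upper/lower-triangular good NAC coloring of Theorem \ref{DensestMovCay}(1), the extra generator $E_{1,3}$ simply joining the blue (upper-triangular) class. You instead fix the prism $C(S_{3},\lbrace a,a^{2},b\rbrace)$ as a non-abelian base and pad the degree with cyclic direct factors ($\mathbb{Z}/2\mathbb{Z}$ contributing $1$, $\mathbb{Z}/q\mathbb{Z}$ with $q\geqslant 3$ contributing $2$), getting movability from Proposition \ref{CartesianProdProp}. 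What each buys: the paper's graphs all come from one family of ``genuinely'' non-abelian (nearly simple) groups, and it even covers $r=2$ --- via $SL_{2}(\mathbb{F}_{2})\cong S_{3}$ on two involutions, i.e.\ a $6$-cycle, which is bipartite, so the paper quietly violates its own standing non-bipartiteness convention there; your reading that $r=2$ falls outside the scope is the more careful one. Your construction uses far smaller groups and a more elementary verification, at the cost that the groups are non-abelian only through the single $S_{3}$ factor.

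One caution about your main citation. Corollary \ref{CorA1} as stated assumes only \emph{pairwise} trivial intersections $\langle s_{i}\rangle\cap\langle s_{j}\rangle=\lbrace e\rbrace$, which is weaker than what the coloring machinery actually needs, namely $\langle s\rangle\cap\big\langle S\setminus\lbrace s,s^{-1}\rbrace\big\rangle=\lbrace e\rbrace$ as in Theorem \ref{ThmCay}(2): in $(\mathbb{Z}/4\mathbb{Z})^{2}$ with $S=\lbrace(1,0),(0,1),(1,1)\rbrace$ the pairwise condition holds, yet every ``one generator blue, rest red'' coloring fails to be a good NAC coloring because the two remaining generators already generate the whole group. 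In your construction this is harmless: taking $s=a$, one has $\langle a\rangle\cap\langle b,g_{1},\dots,g_{k}\rangle=\lbrace e\rbrace$ because the $g_{i}$ lie in independent direct factors and $\langle a\rangle\cap\langle b\rangle=\lbrace e\rbrace$ in $S_{3}$, so the stronger hypothesis of Theorem \ref{ThmCay}(2) is satisfied, and your parenthetical cartesian-product argument via Proposition \ref{CartesianProdProp} is self-contained in any case. I would therefore phrase the movability step through Theorem \ref{ThmCay}(2) or Proposition \ref{CartesianProdProp} rather than through Corollary \ref{CorA1}.
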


Finally, in the concluding section, we point out some open questions and further directions of research.
\subsection{Acknowledgements}
The author is grateful to Josef Schicho for a number of helpful discussions on rigidity and flexibility of graphs and for his encouragement in pursuing this work. The work was initiated while on a visit to the Johann Radon Institute for Computational and Applied Mathematics (RICAM) and the Johannes Kepler University (JKU) Linz. The author would also like to thank the Fakult\"at f\"ur Mathematik, Universit\"at Wien where his work was supported by the European Research Council (ERC) grant of Goulnara Arzhantseva, ``ANALYTIC" grant agreement no. 259527.

\section{Preliminaries, Definitions and Notations} 	
	The topic of rigidity of graphs has a long history. This can be attributed to its applications in a variety of areas like in mechanical frameworks, rigid structures, robotics etc.
	Laman studied it in the $1970$'s and gave a criterion for certain graphs to be movable.
	\begin{theorem}[Laman's criterion \cite{Lam70}]\label{Lamcrit}
	Let $\mathbf{\Gamma}$ be a finite graph with $|E|< 2|V|-3$. Then $\mathbf{\Gamma}$ is movable.
	\end{theorem}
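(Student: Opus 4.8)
The plan is to prove movability directly from a dimension count on the configuration space, using the \emph{strict} inequality $|E|<2|V|-3$ to produce a positive-dimensional family of realisations and then upgrading flexibility to movability by an openness argument. First I would note that $|E|\geqslant 1$ together with $|E|<2|V|-3$ forces $|V|\geqslant 3$, and fix a \emph{generic} injective realisation $\rho_{0}:V\to\mathbb{R}^{2}$, placing the vertices in general position so that all vertices are distinct and the image spans $\mathbb{R}^{2}$; let $\lambda$ be the edge labelling it induces. The object to study is the measurement (squared-distance) map
$$f:\mathbb{R}^{2|V|}\longrightarrow \mathbb{R}^{|E|},\qquad f(\rho)=\bigl(\|\rho(u)-\rho(v)\|_{2}^{2}\bigr)_{(u,v)\in E},$$
whose fibre $\mathcal{C}=f^{-1}(f(\rho_{0}))$ is exactly the space of realisations of $(\mathbf{\Gamma},\lambda)$.

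The central step is a rank bound. The differential $df_{\rho}$ is, up to a harmless factor of $2$, the rigidity matrix of the framework, an $|E|\times 2|V|$ matrix, so $\operatorname{rank} df_{\rho}\leqslant |E|$ for every $\rho$. Consequently, at a point where the rank is locally constant (which a generic $\rho_{0}$ achieves, the rank there being locally maximal), the constant-rank theorem exhibits $\mathcal{C}$ near $\rho_{0}$ as a smooth submanifold of dimension $2|V|-\operatorname{rank}df_{\rho_{0}}\geqslant 2|V|-|E|$. Here the strict Laman inequality enters decisively: $2|V|-|E|>3$. I would then quotient by the group of direct euclidean isometries of $\mathbb{R}^{2}$, which is $3$-dimensional and acts \emph{freely} on $\mathcal{C}$, since an orientation-preserving isometry fixing two distinct points is the identity and $\mathcal{C}$ contains at least two distinct points (edge lengths are positive). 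Hence $\mathcal{C}$ modulo rigid motions is a manifold of dimension $\geqslant 2|V|-|E|-3>0$, giving infinitely many inequivalent realisations; this already yields flexibility.

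To pass from flexibility to movability I would use that injectivity is an \emph{open} condition: the non-injective locus is the finite union, over pairs $u\neq v$, of the subvarieties $\{\rho:\rho(u)=\rho(v)\}$, so its complement is open in $\mathbb{R}^{2|V|}$. Since $\rho_{0}$ was chosen injective, a whole neighbourhood of $\rho_{0}$ in $\mathcal{C}$ consists of injective realisations; as this neighbourhood is positive-dimensional modulo rigid motions, it contains infinitely many pairwise inequivalent injective realisations. Therefore $(\mathbf{\Gamma},\rho_{0})$ is movable.

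I expect the main technical obstacle to be the justification that a generic injective $\rho_{0}$ can be taken to be a point of locally maximal rank for $f$, so that the constant-rank machinery applies and the infinitesimal flexes guaranteed by the kernel bound genuinely integrate to a continuous flex. This is precisely where being \emph{strictly} below the count $2|V|-3$ is essential: it forces $\operatorname{rank}df_{\rho}\leqslant |E|<2|V|-3$ everywhere, so $f$ can never locally pin down a configuration up to rigid motion, and the delicate gap between infinitesimal and finite flexibility that appears at the threshold $|E|=2|V|-3$ simply disappears below it. The remaining points — that general position makes the image span $\mathbb{R}^{2}$ and that the $SE(2)$-action is free on all of $\mathcal{C}$ — are then routine.
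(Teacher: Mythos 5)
Your argument is correct, but there is in fact nothing in the paper to compare it against: the paper states this criterion as a known classical result, citing \cite{Lam70} (with the result going back to \cite{PG27}), and gives no proof of it. What you supply is the standard analytic, Asimow--Roth-style dimension count, and it does establish the statement. In detail: the differential of your squared-distance map is (twice) the rigidity matrix, so its rank is at most $|E|$; the locus where the rank attains its global maximum is nonempty, open and dense (failure of maximal rank is the vanishing of all top minors, a polynomial condition), and so is the locus of injective configurations (the complement of finitely many codimension-two linear subspaces $\lbrace \rho : \rho(u)=\rho(v)\rbrace$), hence a configuration $\rho_{0}$ that is simultaneously injective and of maximal --- therefore locally constant --- rank exists; the constant-rank theorem makes the fibre a manifold of dimension $2|V|-\mathrm{rank}\, df_{\rho_{0}} \geqslant 2|V|-|E| > 3$ near $\rho_{0}$; and the free $3$-dimensional action of the direct isometries then forces infinitely many inequivalent realisations, all injective if taken in a small enough neighbourhood of $\rho_{0}$. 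Two points deserve tightening. First, to assert that the quotient by rigid motions \emph{is a manifold} you need properness of the action (true on configurations with at least two distinct image points), but you can sidestep the quotient manifold theorem entirely: a manifold of dimension strictly greater than $3$ cannot be covered by finitely (or countably) many $3$-dimensional immersed orbits, which already yields infinitely many equivalence classes. Second, the theorem as used in the paper quantifies existentially over the labelling --- ``the graph is movable'' means \emph{some} labelling admits infinitely many inequivalent injective realisations --- and your proof delivers exactly that, with $\lambda$ induced by the generic injective $\rho_{0}$; this is worth stating explicitly, since the paper's one-line formulation leaves the quantifier over $\lambda$ implicit. With those glosses your proof is complete, and it is a self-contained substitute for what the paper only cites.
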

	He infact, showed something stronger - the graph is movable if it does not contain any Laman subgraph with the same set of vertices. By a Laman subgraph one means $\mathbf{\Gamma} = (V,E)$ with $|E| = 2|V|-3$ and $|E'|\leqslant 2|V'|-3$ for any subgraph $\mathbf{\Gamma}' = (V',E')$ of $\mathbf{\Gamma}$. 
	This was originally a result of Pollaczek-Geiringer \cite{PG27} which was rediscovered by Laman. However, the above is a sufficient condition. There exist a lot of graphs which do not satisfy the above criteria but are movable. For example Dixon showed that \cite{Dix1899} bi-partite graphs are movable. Other works in related contexts can be found in  \cite{FJK15}, \cite{Sta14}, \cite{JJSS15}, \cite{MT01} etc. In conclusion, the study of rigidity and flexibility of non-bipartite graphs which do not satisfy Laman's criteria is interesting and shall be the setting of this article. We now collect some important definitions and notations which we shall need for the rest of the work.
	
	 Let $G$ be any group and $A$ be an arbitrary subset of $G$ (not necessarily symmetric and not necessarily containing the identity). The $h$-fold product set of $A$ is defined as $$A^{h} :=\lbrace a_{1}.a_{2}...a_{h} : a_{1},\ldots,a_h \in A \rbrace.$$ 
	Now, we recall the notion of a Cayley graph of a group.
	\begin{definition}[Cayley graph]
		Let $G$ be a finite group and $A$ be a symmetric generating set\footnote{a generating set in a group always means that any element of the group can be written as a product of finite elements from the set} of $G$. Then the Cayley graph $C(G,A)$ is the graph having the elements of $G$ as vertices and $\forall x,y\in G$ there is an (undirected) edge between $x$ and $y$ if and only if $\exists s\in A$ such that $sx=y$. 
	\end{definition}

	Although we shall not need it, we mention that the Cayley graphs can be defined for larger classes of groups, like all finitely generated groups. Further, the generating set $A$ need not be symmetric, in which case the graph is directed. Since we are dealing with undirected graphs, we choose $A= A^{-1}$. Also the identity $e\in G$ may be in $A$, in which case the graph has loops. We are dealing with loopless graphs, so for us $e\notin A$. 

	A graph is said to be $r$-regular (where $r\geqslant 1$ is an integer) if there are exactly $r$ half edges connected to each vertex.
	If $|A|= d$, it is clear that $C(G,A)$ will be $d$-regular (where $|A|$ denotes the cardinality of the set $A$). We shall sometimes refer to $r$-regular graphs as degree $r$ graphs.
	
	\begin{definition}[NAC-coloring and good NAC-coloring]\label{defNAC}
		Let $\mathbf{\Gamma}$ be a graph and $\mathcal{C}$ be a coloring of edges using two colors say red and blue. A cycle in $G$ is a red cycle if all its edges are red while it is an almost red cycle, if exactly one of its edges is blue, while all other edges are red. Blue cycles and almost blue cycles are defined similarly. Also the notion of red paths or of blue paths is now clear i.e., a path made up of only red edges or a path made up of only blue edges respectively.
		
		A coloring $\mathcal{C}$ is called a NAC-coloring, if it is surjective (i.e., uses both the colors) and there are no almost blue cycles or almost red cycles in $G$. It is a good NAC-coloring if it is a NAC-coloring and there does not exist a pair of distinct vertices which are joined by both a blue path and a red path.
	\end{definition}
	The definition of a NAC-coloring was introduced in \cite{GLS17}. One of the main results of their work was a combinatorial criteria for flexibility using NAC-colorings.
	\begin{theorem}[\cite{GLS17}]\label{ThmNACflex}
		A connected graph $G$ with at least one edge has a 
flexible labeling if and only if it
		has a NAC-coloring.
	\end{theorem}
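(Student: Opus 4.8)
The plan is to prove both implications, handling the ``if'' direction by an explicit construction and the ``only if'' direction through a valuation argument on the complex realisation variety.

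\emph{From a NAC-coloring to a flexible labeling.} Suppose $\mathbf{\Gamma}$ carries a NAC-coloring with colour classes red and blue. First I would record the combinatorial content of the two forbidden configurations: the absence of an almost red cycle forces every blue edge to join two distinct red-connected components, and dually the absence of an almost blue cycle forces every red edge to join two distinct blue-connected components. I then choose a map $a:V\to\mathbb{R}$ that is constant on each red component and takes distinct values on distinct red components, together with a map $b:V\to\mathbb{R}$ that is constant on each blue component and separates the blue components. For $\theta\in(0,\pi)$ define the realisation
\[
\rho_\theta(v)=a(v)\,(1,0)+b(v)\,(\cos\theta,\sin\theta).
\]
Along a red edge $uv$ one has $a(u)=a(v)$ while $b(u)\neq b(v)$, so $\rho_\theta(u)-\rho_\theta(v)=(b(u)-b(v))(\cos\theta,\sin\theta)$ has the $\theta$-independent positive length $|b(u)-b(v)|$; dually every blue edge has the constant length $|a(u)-a(v)|>0$. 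Hence the induced labelling $\lambda$ is genuinely positive and $\{\rho_\theta\}_{\theta\in(0,\pi)}$ realises it for every $\theta$. Since the colouring is surjective there is at least one red and one blue edge, the angle between them equals $\theta$, and a rigid motion preserves such angles; so distinct $\theta$ give non-congruent realisations and $\lambda$ is flexible.

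\emph{From a flexible labeling to a NAC-coloring.} Here I would complexify. Writing $z_v=x_v+iy_v$ and $w_v=x_v-iy_v$, the squared-length equation for an edge reads $(z_u-z_v)(w_u-w_v)=\lambda_{uv}^2$, a nonzero constant. A flexible labelling means that the complex solution set of this system, after quotienting by the complexified group of rigid motions, contains infinitely many points and hence a component of positive dimension; restricting to an irreducible curve $C$ inside it and passing to the function field $K=\mathbb{C}(C)$, the coordinates $z_v,w_v$ become elements of $K$. They cannot all be constant: if every edge-difference $z_u-z_v$ were constant then, $\mathbf{\Gamma}$ being connected, all $z_v$ would be constant, whence by the length relation all $w_v$ would be constant and the motion rigid. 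Thus some edge-difference is a non-constant function on $C$.

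Now fix a place $\nu$ of $K$ (trivial on $\mathbb{C}$) and colour an edge $uv$ red when $\nu(z_u-z_v)>0$ and blue when $\nu(z_u-z_v)<0$; the identity $\nu(z_u-z_v)+\nu(w_u-w_v)=\nu(\lambda_{uv}^2)=0$ makes the definition symmetric in the two coordinates. The no-almost-cycle conditions then drop out of the ultrametric inequality applied to the telescoping identity $\sum_i (z_{v_{i+1}}-z_{v_i})=0$ around a cycle. In an almost red cycle the single blue edge has strictly negative $z$-valuation while every red edge has strictly positive $z$-valuation, so the minimum valuation among the summands is attained exactly once; the ultrametric law then makes $\nu$ of the sum equal to that finite negative number, contradicting the vanishing of the sum. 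Replacing $z$ by $w$ disposes of almost blue cycles identically. The step I expect to be the crux is arranging that this colouring is \emph{surjective} (and that no edge gets valuation $0$), since a carelessly chosen place can colour every edge one colour. The plan is to take $\nu$ adapted to the non-constant edge-difference found above: a pole of such a function supplies a blue edge, and one must then produce a red edge at the same place, using that the divisor of a function on $C$ has degree zero together with the connectedness of $\mathbf{\Gamma}$ and the pairing between the $z$- and $w$-coordinates, possibly after normalising by the scaling $z\mapsto tz,\ w\mapsto t^{-1}w$ that fixes all edge lengths. Once such a place is secured, the colouring it defines is a NAC-coloring, completing the equivalence.
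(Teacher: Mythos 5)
First, a remark on scope: the paper itself gives no proof of this statement --- Theorem \ref{ThmNACflex} is quoted from \cite{GLS17} --- so your attempt has to be measured against the original proof of Grasegger, Legersk\'y and Schicho, whose strategy (an explicit one-parameter family of realisations for the ``if'' direction, a valuation on the function field of a curve in the complexified realisation space for the ``only if'' direction) you have essentially reconstructed. Your ``if'' direction is complete and correct: the observation that the NAC conditions force every red edge to join two distinct blue components and vice versa is exactly what makes the edge lengths of $\rho_\theta$ positive and independent of $\theta$, and the angle argument yields infinitely many pairwise non-equivalent realisations.

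The gap is in the ``only if'' direction, at precisely the point you flag as the crux, and the repair you sketch would not work as stated. Two concrete problems. First, your colouring rule (red iff $\nu(z_u-z_v)>0$, blue iff $\nu(z_u-z_v)<0$) leaves edges of valuation $0$ uncoloured, and your hope of ``arranging that no edge gets valuation $0$'' is impossible in general: to make the curve $C$ parametrise genuinely distinct congruence classes you must normalise away the complexified isometries, say by pinning an edge, and the pinned edge then has valuation $0$ at \emph{every} place; likewise, if $\mathbf{\Gamma}$ contains a triangle (as do the paper's examples, e.g.\ $C(\mathbb{Z}/6\mathbb{Z},\{\pm 2,\pm 3\})$), the differences along the triangle's edges are constant on $C$ and so have valuation $0$ everywhere. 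The correct rule is asymmetric: red iff $\nu>0$, blue iff $\nu\leqslant 0$; the ultrametric argument survives this change (in an almost red cycle the unique blue edge has valuation $\leqslant 0$, strictly below all the red valuations; for almost blue cycles pass to the $w$-differences, where the unique red edge has valuation $<0$ while all blue edges have valuation $\geqslant 0$). Second, choosing $\nu$ at a \emph{pole} of the non-constant edge-difference and then invoking the degree-zero property of its divisor to ``produce a red edge'' cannot succeed: degree zero gives you a zero of that function at some \emph{other} place of $K$, whereas your colouring lives at one fixed place. The fix is to reverse the choice: take $\nu$ at a \emph{zero} of a non-constant edge-difference, which makes that edge red, and the pinned edge (valuation $0$ everywhere, hence blue under the corrected rule) then guarantees surjectivity. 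With these two corrections your outline becomes the proof in \cite{GLS17}.
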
 

	In a subsequent work \cite{GLS19}, they showed a sufficient condition for movability:
\begin{proposition}[Lemma $3.2$ \cite{GLS19}]\label{keyLem1}
	Let $\mathcal{C}$ be a NAC-coloring of a graph $\mathbf{\Gamma}$. Let $R_{1}, \cdots, R_{m}$ be the sets of vertices of connected components of the graph obtained from $\mathbf{\Gamma}$ by keeping only red edges and  $B_{1}, \cdots,B_{n}$ be the same for the blue edges. If $|R_i \cap B_j | \leqslant 1 \,\forall 1\leqslant i \leqslant m, 1\leqslant j \leqslant n$ then $\mathbf{\Gamma}$ is movable.
\end{proposition}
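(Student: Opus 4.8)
The plan is to convert the NAC-coloring into an explicit one-parameter family of realizations and then to read injectivity off directly from the hypothesis $|R_i \cap B_j| \leqslant 1$. Identify $\mathbb{R}^{2}$ with $\mathbb{C}$. Since the red components $R_{1},\dots,R_{m}$ partition $V$ and the blue components $B_{1},\dots,B_{n}$ partition $V$, every vertex $v$ lies in a unique red component $R_{i(v)}$ and a unique blue component $B_{j(v)}$. First I would fix two \emph{injective} maps $p\colon \{R_{1},\dots,R_{m}\}\to\mathbb{C}$ and $q\colon \{B_{1},\dots,B_{n}\}\to\mathbb{C}$ (possible since the component sets are finite), and for each angle $\alpha$ define
\begin{equation*}
\rho_{\alpha}(v) \;=\; p(R_{i(v)}) + e^{i\alpha}\,q(B_{j(v)}).
\end{equation*}
The edge labelling $\lambda$ is then taken to be the one induced by $\rho_{0}$.

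Next I would check that every $\rho_{\alpha}$ realises this same $\lambda$. For a red edge $(u,v)$ the endpoints share a red component, so $p(R_{i(u)})=p(R_{i(v)})$ and $\rho_{\alpha}(u)-\rho_{\alpha}(v)=e^{i\alpha}(q(B_{j(u)})-q(B_{j(v)}))$, whose modulus is independent of $\alpha$; for a blue edge the endpoints share a blue component, so $\rho_{\alpha}(u)-\rho_{\alpha}(v)=p(R_{i(u)})-p(R_{i(v)})$, again independent of $\alpha$. Hence all edge lengths are constant in $\alpha$ and each $\rho_{\alpha}$ realises $\lambda$. These lengths are positive: for a red edge the endpoints lie in different blue components (otherwise $|R_{i}\cap B_{j}|\geqslant 2$), so injectivity of $q$ makes the length nonzero, and symmetrically for blue edges. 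The same observation together with surjectivity of $\mathcal{C}$ forces $m,n\geqslant 2$, so the family is genuinely non-constant; flexibility then also follows from Theorem \ref{ThmNACflex}, but for movability the real task is injectivity of $\rho_{\alpha}$.

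The heart of the argument, and the step I expect to be the main obstacle, is the injectivity analysis, where one must verify that the hypothesis eliminates exactly the \emph{parameter-independent} collisions. Suppose $\rho_{\alpha}(u)=\rho_{\alpha}(v)$ with $u\neq v$; then $p(R_{i(u)})-p(R_{i(v)})=e^{i\alpha}\big(q(B_{j(v)})-q(B_{j(u)})\big)$. I would split into cases by whether $u,v$ share a red and/or a blue component. If $i(u)=i(v)$ and $j(u)=j(v)$, then $u,v\in R_{i(u)}\cap B_{j(u)}$, which has at most one element, contradicting $u\neq v$. If exactly one equality holds, one side of the identity vanishes while the other is nonzero by injectivity of $p$ (resp. $q$), again a contradiction. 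The only surviving case is $i(u)\neq i(v)$ and $j(u)\neq j(v)$, where both differences are nonzero, so the identity forces $e^{i\alpha}=\big(p(R_{i(u)})-p(R_{i(v)})\big)/\big(q(B_{j(v)})-q(B_{j(u)})\big)$, pinning $\alpha$ modulo $2\pi$ to at most one value for each such pair $(u,v)$. As there are finitely many pairs, only finitely many angles can fail injectivity.

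Finally I would assemble the conclusion. Since equivalence is by \emph{direct} isometries, which preserve oriented angles between edges, and since under $\rho_{\alpha}$ the oriented angle from a fixed blue edge to a fixed red edge equals $\alpha$ up to an additive constant (the blue-edge vector is independent of $\alpha$ while the red-edge vector is rotated by $e^{i\alpha}$), distinct values of $\alpha\in[0,2\pi)$ yield pairwise non-equivalent realisations. Discarding the finitely many angles where injectivity fails still leaves infinitely many injective, pairwise non-equivalent realisations of $\lambda$, so $\mathbf{\Gamma}$ is movable.
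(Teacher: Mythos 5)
Your proof is correct, but there is nothing in the paper to compare it against: the paper does not prove this proposition at all, it imports it verbatim as Lemma~3.2 of \cite{GLS19} and only uses it to deduce Lemma~\ref{keyLem2}. Your argument is essentially the grid construction by which Grasegger, Legersk\'y and Schicho prove that lemma in the cited reference: place $v$ at $p(R_{i(v)})+e^{\mathrm{i}\alpha}q(B_{j(v)})$ with $p,q$ injective, observe that red edges have $\alpha$-independent length $|q(B_{j(u)})-q(B_{j(v)})|$ and blue edges length $|p(R_{i(u)})-p(R_{i(v)})|$, and then use $|R_i\cap B_j|\leqslant 1$ plus injectivity of $p$ and $q$ to show that any collision $\rho_{\alpha}(u)=\rho_{\alpha}(v)$ either is impossible outright or pins $e^{\mathrm{i}\alpha}$ to a single value, so only finitely many angles fail to give injective realisations. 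All the delicate points are handled: positivity of the edge lengths (endpoints of a red edge lie in distinct blue components, by the hypothesis or, alternatively, by the absence of almost-blue cycles), the existence of both colours forcing $m,n\geqslant 2$, and the pairwise non-equivalence of the $\rho_{\alpha}$ under \emph{direct} isometries via the oriented angle between a fixed blue-edge vector and a fixed red-edge vector. The only blemish is notational: you use $i$ simultaneously as the imaginary unit and as the red-component index $i(v)$; writing $\mathrm{i}$ or using a rotation matrix $R_{\alpha}$ would remove the clash.
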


Using the above Prop. \ref{keyLem1} one can conclude -

\begin{lemma}\label{keyLem2}
	A finite, simple graph is movable if there exists a good NAC coloring of the edges.
\end{lemma}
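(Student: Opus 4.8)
The plan is to show that the goodness of the NAC-coloring forces exactly the combinatorial hypothesis of Proposition \ref{keyLem1}, after which movability follows immediately. So I would fix a good NAC-coloring $\mathcal{C}$ of $\mathbf{\Gamma}$ and let $R_{1},\dots,R_{m}$ (respectively $B_{1},\dots,B_{n}$) denote the vertex sets of the connected components of the subgraph obtained by keeping only the red (respectively blue) edges, exactly as in the statement of Proposition \ref{keyLem1}. The entire task is then to verify that $|R_{i}\cap B_{j}|\leqslant 1$ for all $1\leqslant i\leqslant m$ and $1\leqslant j\leqslant n$.

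To this end I would argue by contradiction. Suppose that for some pair of indices $i,j$ we had $|R_{i}\cap B_{j}|\geqslant 2$, so that there exist two \emph{distinct} vertices $u\neq v$ with $u,v\in R_{i}\cap B_{j}$. Since $u$ and $v$ lie in the same red connected component $R_{i}$, by the definition of a connected component they are joined by a red path; since they also lie in the same blue component $B_{j}$, they are joined by a blue path. But then $u$ and $v$ form a pair of distinct vertices joined simultaneously by a red path and a blue path, which directly contradicts the hypothesis that $\mathcal{C}$ is a \emph{good} NAC-coloring (Definition \ref{defNAC}).

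Hence $|R_{i}\cap B_{j}|\leqslant 1$ for all admissible $i,j$, and Proposition \ref{keyLem1} applies verbatim to conclude that $\mathbf{\Gamma}$ is movable. The only point requiring any care is to confirm that the two notions of path agree: the red/blue paths appearing in the definition of a good NAC-coloring are precisely the paths living inside the red/blue subgraphs whose components define the $R_{i}$ and $B_{j}$. Since by definition a red path is exactly a path all of whose edges are red, this identification is immediate. I therefore do not anticipate any genuine obstacle here: the substantive analytic content of the lemma is carried entirely by Proposition \ref{keyLem1}, and the role of the goodness condition is precisely to forbid the two-fold overlap of components that Proposition \ref{keyLem1} assumes away.
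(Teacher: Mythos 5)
Your proof is correct and takes essentially the same route as the paper's: the paper's own (very terse) proof likewise reduces the lemma to Proposition \ref{keyLem1} by noting that a good NAC-coloring is precisely the condition $|R_{i}\cap B_{j}|\leqslant 1$ on the red and blue components. You have merely written out the contradiction argument that the paper leaves implicit, so there is no gap and nothing genuinely different in approach.
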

\begin{proof}
	The proof is direct once we recall the definition of a good NAC-coloring - Definition \ref{defNAC} and show that the existence of such a coloring is equivalent to the condition of Lemma \ref{keyLem1}. 
\end{proof}

	\section{Movability of Cayley graphs}

	 We will now state the conditions under which a Cayley graph becomes movable -  Theorem \ref{thmCay}.
		
	\begin{theorem}[Movability of Cayley graphs]\label{ThmCay}
			Let $G$ be a finite group with the identity element $e$ and $S:= \lbrace s_{1},s_{2},\cdots, s_{k}\rbrace\subseteq G$ be such that $\langle S\cup S^{-1} \rangle = G$, $e \notin S$. 
		\begin{enumerate}
			\item Let $k=1$. If $|G| \leqslant 3$, then $C(G,S\cup S^{-1})$ is rigid. If $|G|> 3$, then the cycle graph of $G = \langle s_{1},s_{1}^{-1} \rangle$ is movable.
			\item Let $k>1$. If $\exists s\in S$ such that, 
		$$\langle s,s^{-1}\rangle\cap \Big(S\setminus \lbrace s,s^{-1} \rbrace \Big)  = \emptyset \text{ and }\lbrace s,s^{-1} \rbrace \cap \Big\langle \Big( S\setminus \lbrace s,s^{-1}\rbrace\Big)\Big\rangle = \emptyset ,$$
		then $C(G, S\cup S^{-1} ) $ is flexible. Further, if 
		$$\langle s,s^{-1}\rangle\cap  \Big\langle \Big( S\setminus \lbrace s,s^{-1}\rbrace\Big)\Big\rangle = \lbrace e \rbrace,$$ then $C(G, S\cup S^{-1} ) $  is movable. 
		\item Let $k>1$. If there exists a partition $S_{1}\sqcup S_{2} $ of $S$ such that, 
		$$\langle S_{1}\rangle\cap \Big(S\setminus S_{1} \cup S_{1}^{-1} \Big)  = \emptyset \text{ and }(S_{1} \cup S_{1}^{-1} ) \cap \Big\langle \Big( S\setminus S_{1} \cup S_{1}^{-1} \Big)\Big\rangle = \emptyset ,$$
		then $G = \langle S\cup S^{-1} \rangle$ is flexible. Further, if 
		$$\langle S_{1}\rangle\cap  \Big\langle \Big( S\setminus S_{1} \cup S_{1}^{-1} \Big)\Big\rangle = \lbrace e \rbrace,$$ then $G$ is movable. 
		\end{enumerate}		
	\end{theorem}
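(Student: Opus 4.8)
The plan is to produce an explicit edge two-coloring of $C(G, S\cup S^{-1})$ read off from the partition $S = S_1 \sqcup S_2$, and then to verify the NAC-coloring property (for flexibility) and the good NAC-coloring property (for movability) directly from the hypotheses. Throughout write $S_i^{\pm} := S_i \cup S_i^{-1}$, $H := \langle S_1 \rangle = \langle S_1^{\pm}\rangle$ and $K := \langle S_2 \rangle = \langle S_2^{\pm}\rangle$, where $S_2 = S \setminus S_1$; the sets written as $S\setminus S_1 \cup S_1^{-1}$ in the statement are to be read as the ``blue'' generator set $S_2^{\pm}$ together with the subgroup $K$ it generates. Color an edge $\{x, sx\}$ \emph{red} when $s \in S_1^{\pm}$ and \emph{blue} when $s \in S_2^{\pm}$. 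First I would check this is well defined and surjective: the hypothesis $S_1^{\pm} \cap K = \emptyset$ forces $S_1^{\pm} \cap S_2^{\pm} = \emptyset$ (since $S_2^{\pm} \subseteq K$), so no edge is assigned two colors, and since the partition has two non-empty blocks (possible because $k > 1$) both colors genuinely appear.

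The key structural observation is that, because the edge relation $sx = y$ is left multiplication, a monochromatic walk starting at $x$ left-multiplies $x$ by a word in the corresponding generators. Hence the connected component of the red subgraph through $x$ is exactly the right coset $Hx$, and that of the blue subgraph is $Kx$. With this dictionary the NAC conditions become transparent. An almost red cycle consists of a single blue edge $\{y, sy\}$ (so $s \in S_2^{\pm}$) together with a red path from $sy$ back to $y$; such a path exists iff $sy \in Hy$, i.e. iff $s \in H$. The hypothesis $\langle S_1\rangle \cap S_2^{\pm} = \emptyset$ rules this out. By the symmetric argument an almost blue cycle would force some $s \in S_1^{\pm}$ to lie in $K = \langle S_2\rangle$, which is forbidden by $S_1^{\pm} \cap K = \emptyset$. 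Thus the coloring is a NAC-coloring, and flexibility of $C(G, S\cup S^{-1})$ follows from Theorem~\ref{ThmNACflex}.

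For movability I would upgrade the coloring to a good one and invoke Lemma~\ref{keyLem2} (equivalently, check Proposition~\ref{keyLem1} directly). Suppose distinct vertices $u \neq v$ were joined by both a red and a blue path; then $v \in Hu$ and $v \in Ku$, so $v = hu = ku$ with $h \in H$, $k \in K$, whence $h = k \in H \cap K$ and $h \neq e$. The extra hypothesis $\langle S_1\rangle \cap \langle S_2^{\pm}\rangle = \{e\}$, i.e. $H \cap K = \{e\}$, makes this impossible, so the NAC-coloring is good and movability follows from Lemma~\ref{keyLem2}. Equivalently, any red component $Hx$ meets any blue component $Ky$ in at most a single coset of $H \cap K$, so $|Hx \cap Ky| \leqslant 1$ and Proposition~\ref{keyLem1} applies directly.

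The conceptual content is thus entirely contained in the coset picture; the work that remains is bookkeeping. I must stay consistent with the left-multiplication convention $sx = y$, which is precisely what turns monochromatic components into the right cosets $Hx, Kx$ (rather than left cosets $xH, xK$), and I must ensure both blocks of the partition are non-empty so that the coloring is surjective --- this is where the hypothesis $k > 1$ enters, and taking $S_1 = \{s\}$ recovers part~(2) as the special case. I expect the only real obstacle to be checking cleanly that the set-theoretic hypotheses as written are exactly equivalent to the coset-level statements ``no off-color generator lies in the on-color subgroup'' and ``$H \cap K = \{e\}$'', since one has to track the symmetrizations $S_i \cup S_i^{-1}$ with care; the symmetry of $H$ and $K$ under inversion is what makes the various phrasings coincide.
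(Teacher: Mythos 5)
Your treatment of parts (2) and (3) is essentially the paper's own proof: the paper colors the edges coming from $\{s,s^{-1}\}$ (resp.\ $S_1\cup S_1^{-1}$) blue and the remaining edges red, verifies the NAC condition from the two emptiness hypotheses to get flexibility via Theorem~\ref{ThmNACflex}, and verifies the good-NAC condition from $\langle S_1\rangle \cap \big\langle S\setminus (S_1\cup S_1^{-1})\big\rangle = \{e\}$ to get movability via Lemma~\ref{keyLem2}. Your coset dictionary (red components are the right cosets $Hx$, blue components are $Ky$) is in fact a more careful rendering of the paper's argument, which at the corresponding step appeals to vertex transitivity to reduce to monochromatic paths starting at the identity; your remark that $H\cap K=\{e\}$ gives $|Hx\cap Ky|\leqslant 1$ also lets you invoke Proposition~\ref{keyLem1} directly. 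On these two parts your argument is correct and, if anything, tighter than the original.

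However, you never prove part (1) of the theorem, and your method cannot reach it: for $k=1$ there is no partition of $S$ into two non-empty blocks, so no surjective two-coloring arises from your construction --- you yourself note that surjectivity is exactly ``where the hypothesis $k>1$ enters.'' Part (1) asserts that $C(G,S\cup S^{-1})$ is rigid when $|G|\leqslant 3$ (the graphs $K_2$ and $K_3$) and movable when $|G|>3$ (a cycle graph). The paper handles this case by a separate, elementary argument: $K_2$ and $K_3$ are rigid by inspection, and for $|G|>3$ the cycle graph has $|E|=|G|<2|G|-3$, so it is movable by Laman's criterion (Theorem~\ref{Lamcrit}). One could alternatively exhibit a good NAC coloring of a cycle of length at least $4$ directly (e.g.\ color two independent edges red and the rest blue), but some argument must be supplied; as written, your proposal establishes only two of the three assertions of the theorem.
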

	\begin{proof} Part $(1)$. Let $k=1$. We implicitly assume that $|G|>1$ since $|G|=1$ implies that $e$ is the only element in $G$ and according to our assumption $e\notin S$. 
			
			Let $S = \lbrace s_{1}\rbrace $ and $|G| \leqslant 3$. The only graphs to consider here are $K_{2}$ and $K_{3}$. It is easy to see that the complete graphs $K_{2}$ and $K_{3}$ are rigid.
			
\begin{center}
	\begin{minipage}{.3\textwidth}
		
		\begin{tikzpicture}[scale=0.33] 
		
		\coordinate[label=above left:A] (A) at (-2,1);
	\coordinate[label=above:B] (B) at (2,1);
	\fill[black] (A) circle (4pt);
	\fill[black] (B) circle (4pt);
	\draw (A)--(B)--(A);
	\node at (0.5,-0.2,1) {\textit{$|G|=2: \, K_{2}$ is rigid }};		
		\end{tikzpicture} 		
	\end{minipage}\,\,	
	\begin{minipage}{.3\textwidth}		
		\begin{tikzpicture}[scale=0.33]
		
	\coordinate[label=above left:A] (A) at (0,3);
	\coordinate[label=below:B] (B) at (-2,0);
	\coordinate (C) at (2,0);
	\fill[black] (A) circle (4pt);
	\fill[black] (B) circle (4pt);
	\fill[black] (C) circle (4pt) node[below right]{C};
	\draw (A)--(B)--(C)--(A);
	\node at (0.5,-2,1) {\textit{$|G|=3: \, K_{3}$ is rigid}};
		
		\end{tikzpicture}
		
	\end{minipage}
	\begin{minipage}{.3\textwidth}
	
		\begin{tikzpicture}[scale=0.33] 
	\coordinate[label=above left:A] (A) at (0,4);
	\coordinate (B) at (-2,3);
	\coordinate (C) at (-2,1);
	\coordinate (D) at (0,0);
	\coordinate (E) at (2,1);
	\coordinate (F) at (2,3);
	\fill[black] (A) circle (4pt);
	\fill[black] (B) circle (4pt) node[left]{B};
	\fill[black] (C) circle (4pt) node[below]{C};
	\fill[black] (D) circle (4pt) node[below]{D};
	\fill[black] (E) circle (4pt) node[below]{E};
	\fill[black] (F) circle (4pt) node[right]{F};	
	\draw (A)--(B)--(C)--(D)--(E)--(F)--(A);
	\node at (0.5,-2,1) {\textit{$|G|=n>3: \, K_{n}$ is movable}};	
		\end{tikzpicture}	
	\end{minipage}
\end{center}

		For $|G| > 3$, the conclusion follows from Laman's criterion cf. Theorem \ref{Lamcrit} (we recall that, for the cyclic group generated by $S\cup S^{-1}$ with $S = \lbrace s_{1}\rbrace $, the number of edges of $C(G,S) = |G|$ which is less than $2|G|- 3, \forall G$, since $|G| > 3$).\\		
			\\				 
		 Part $(2)$.	Let $S = \lbrace s_{1},\cdots ,s_{k}\rbrace$ with $k>1$. Without loss of generality assume that $s_{1}= s$ and $s^{-1}\notin S$. Thus $\big(S\setminus\lbrace s,s^{-1}\rbrace\big) = \big(S\setminus\lbrace s_{1},s_{1}^{-1}\rbrace\big) = \lbrace s_{2},\cdots , s_{k}\rbrace.$\\
			\\
			First, let us check that the condition of movability, 	$$\langle s_{1}\rangle\cap  \langle  \lbrace s_{2},\cdots , s_{k}\rbrace\rangle = \lbrace e \rbrace,$$
			 automatically implies the condition on flexibility. Indeed, if $\langle s_{1}\rangle\cap  \Big\langle  \lbrace s_{2},\cdots , s_{k}\rbrace\Big\rangle = \lbrace e \rbrace$ then $\langle s_{1}\rangle\cap \lbrace s_{2},\cdots,s_{k}\rbrace = \emptyset$ (since $e\notin S$) and $\lbrace s_{1},s_{1}^{-1} \rbrace \cap \Big\langle \Big( S\setminus \lbrace s_{1},s_{1}^{-1}\rbrace\Big)\Big\rangle = \lbrace s_{1},s_{1}^{-1}\rbrace \cap
			 \langle \lbrace s_{2},\cdots , s_{k}\rbrace\rangle = \emptyset $ (since $s\neq e$).\\
			\\
			Next, we assume that $\langle s_{1}\rangle\cap \Big(S\setminus \lbrace s_{1},s_{1}^{-1} \rbrace \Big)  = \emptyset \text{ and }\lbrace s_{1},s_{1}^{-1} \rbrace \cap \Big\langle \Big( S\setminus \lbrace s_{1},s_{1}^{-1}\rbrace\Big)\Big\rangle = \emptyset.$\\			
		 Let us colour the edges of $C(G,S\cup S^{-1})$ with the colours red and blue in the following way - the edge corresponding to the element $s_{1}$ (and hence also $s_{1}^{-1}$) is coloured blue and the rest of the edges are coloured red. Then the above two conditions imply that there cannot exist a cycle which contains exactly one red edge or exactly one blue edge. Thus it is a NAC coloring and $C(G,S)$ is flexible by Theorem \ref{ThmNACflex}.
		 \begin{center}
		 	\begin{minipage}{.3\textwidth}
		 		\begin{tikzpicture}[scale=0.5] 
		 	\coordinate[label=above left:A] (A) at (0,8);
		 	\coordinate (B) at (-2,7.5);
		 	\coordinate (C) at (-3.5,6);
		 	\coordinate (D) at (-4,4);
		 	\coordinate (E) at (-3.5,2.5);
		 	\coordinate (F) at (-2.5,1);
		 	\coordinate (G) at (0,0);
		 	\coordinate (H) at (2,0.5);
		 	\coordinate (I) at (3.5,2);
		 	\coordinate (J) at (4,4);
		 	\coordinate (K) at (3.5,6);
		 	\coordinate (L) at (2,7.5);
		 	\fill[black] (A) circle (4pt);
		 	\fill[black] (B) circle (4pt) node[left]{B};
		 	\fill[black] (C) circle (4pt) node[left]{C};
		 	\fill[black] (D) circle (4pt) node[left]{D};
		 	\fill[black] (E) circle (4pt) node[below]{E};
		 	\fill[black] (F) circle (4pt) node[below]{F};
		 	\fill[black] (G) circle (4pt) node[below]{G};
		 	\fill[black] (H) circle (4pt) node[right]{H};
		 	\fill[black] (I) circle (4pt) node[right]{I};
		 	\fill[black] (J) circle (4pt) node[right]{J};
		 	\fill[black] (K) circle (4pt) node[right]{K};
		 	\fill[black] (L) circle (4pt) node[right]{L};
		 	\draw (A)--(C)--(E)--(G)--(I)--(K)--(A);
		 	\draw (B)--(D)--(F)--(H)--(J)--(L)--(B);
		 	\draw (A)--(D)--(G)--(J)--(A);
		 	\draw (B)--(E)--(H)--(K)--(B);
		 	\draw (C)--(F)--(I)--(L)--(C);
		 	\node at (0,-2,1) {\textit{$G= \mathbb{Z}/12\mathbb{Z}, S=\lbrace \bar{2},\bar{3}\rbrace$ is flexible}};	
		 	\end{tikzpicture}		
		 	\end{minipage}\,\,\,\,\,\,\,\,\,\,\,\,\,\,\,\,\,\,\,\,\,\,\,\,
		 	\begin{minipage}{.3\textwidth}
		 		
		 		\begin{tikzpicture}[scale=0.5] 
		 			\coordinate[label=above left:A] (A) at (0,8);
		 		\coordinate (B) at (-2,7.5);
		 		\coordinate (C) at (-3.5,6);
		 		\coordinate (D) at (-4,4);
		 		\coordinate (E) at (-3.5,2.5);
		 		\coordinate (F) at (-2.5,1);
		 		\coordinate (G) at (0,0);
		 		\coordinate (H) at (2,0.5);
		 		\coordinate (I) at (3.5,2);
		 		\coordinate (J) at (4,4);
		 		\coordinate (K) at (3.5,6);
		 		\coordinate (L) at (2,7.5);
		 		\fill[black] (A) circle (4pt);
		 		\fill[black] (B) circle (4pt) node[left]{B};
		 		\fill[black] (C) circle (4pt) node[left]{C};
		 		\fill[black] (D) circle (4pt) node[left]{D};
		 		\fill[black] (E) circle (4pt) node[below]{E};
		 		\fill[black] (F) circle (4pt) node[below]{F};
		 		\fill[black] (G) circle (4pt) node[below]{G};
		 		\fill[black] (H) circle (4pt) node[right]{H};
		 		\fill[black] (I) circle (4pt) node[right]{I};
		 		\fill[black] (J) circle (4pt) node[right]{J};
		 		\fill[black] (K) circle (4pt) node[right]{K};
		 		\fill[black] (L) circle (4pt) node[right]{L};
		 		\draw[color = blue] (A)--(C)--(E)--(G)--(I)--(K)--(A);
		 		\draw[color = blue] (B)--(D)--(F)--(H)--(J)--(L)--(B);
		 		\draw[color = red] (A)--(D)--(G)--(J)--(A);
		 		\draw[color = red] (B)--(E)--(H)--(K)--(B);
		 		\draw[color = red] (C)--(F)--(I)--(L)--(C);
		 		\node at (0.5,-2,1) {\textit{NAC coloring of $C(\mathbb{Z}/12\mathbb{Z},\lbrace \bar{\pm2},\bar{\pm3}\rbrace)$}};	
		 		\end{tikzpicture}	
		 	\end{minipage}	 
		 \end{center}
	 In the above figure, a flexible Cayley graph of the group $G=\mathbb{Z}/12\mathbb{Z}$ with respect to the generating set $S\cup S^{-1}=\lbrace \bar{\pm 2}, \bar{\pm 3}\rbrace$ is constructed. Note that the number of edges $|E|=24 > 2|V|-3$. This implies that it doesn't satisfy Laman's condition [i.e., Theorem \ref{Lamcrit}].\\
	 \\
		 Now, we assume that $\langle s\rangle\cap  \Big\langle \Big( S\setminus \lbrace s,s^{-1}\rbrace\Big)\Big\rangle = \lbrace e \rbrace$. We recall the sufficient condition for movability - Lemma \ref{keyLem2}. We color the edges of $G$ as follows: 
		 $$s_{1},s_{1}^{-1}\rightarrow  \,\,blue\,\,\, \& \,\,\, \big(S\setminus\lbrace s_{1},s_{1}^{-1}\rbrace\big) = \lbrace s_{2},\cdots , s_{k}\rbrace \rightarrow \,\, red.$$ 
		 Note that the Cayley graph is vertex transitive, so we only need to show that starting from the identity, there does not exist a vertex which can be connected by both a red path and a blue path. This is the same as saying that $\not\exists \,g\in G$ with $g\neq e$ such that $g=\lbrace s_{1},s_{1}^{-1}\rbrace^{m}$ and $g=\lbrace s_{2},\cdots,s_{k}\rbrace^{n}$ for all positive integers $m,n$. Our assumption on $S$ implies that this is true. Hence, the graph $C(G,S\cup S^{-1})$ is movable. \\
		 \\
		 The proof of part $(3)$ is similar to that of part $(2)$. Color the edges corresponding to $S_{1}\cup S_{1}^{-1}$ as blue and the edges of $S\setminus S_{1}\cup S_{1}^{-1}$ as red. Then the same argument as in the previous case shows that the graph is flexible under the assumption 
		 $$\langle S_{1}\rangle\cap \Big(S\setminus S_{1} \cup S_{1}^{-1} \Big)  = \emptyset \text{ and }(S_{1} \cup S_{1}^{-1} ) \cap \Big\langle \Big( S\setminus S_{1} \cup S_{1}^{-1} \Big)\Big\rangle = \emptyset.$$
		 While, if we assume that $$\langle S_{1}\rangle\cap  \Big\langle \Big( S\setminus S_{1} \cup S_{1}^{-1} \Big)\Big\rangle = \lbrace e \rbrace,$$ then $G$ is movable. 
		 			\end{proof}
		  \begin{center}
		 	\begin{minipage}{.3\textwidth}
		 		\begin{tikzpicture}[scale=0.5] 
		 	\coordinate[label=above left:A] (A) at (0,4);
		 	\coordinate (B) at (-2,3);
		 	\coordinate (C) at (-2,1);
		 	\coordinate (D) at (0,0);
		 	\coordinate (E) at (2,1);
		 	\coordinate (F) at (2,3);
		 	\fill[black] (A) circle (4pt);
		 	\fill[black] (B) circle (4pt) node[left]{B};
		 	\fill[black] (C) circle (4pt) node[below]{C};
		 	\fill[black] (D) circle (4pt) node[below]{D};
		 	\fill[black] (E) circle (4pt) node[below]{E};
		 	\fill[black] (F) circle (4pt) node[right]{F};	
		 	\draw (A)--(C)--(E)--(A);
		 	\draw (B)--(D)--(F)--(B);
		 	\draw (A)--(D) (B)--(E) (C)--(F);
		 	\node at (0.5,-2,1) {\textit{$G= \mathbb{Z}/6\mathbb{Z}, S=\lbrace 2,3\rbrace$ is movable}};	
		 	\end{tikzpicture}	
		 	\end{minipage}\,\,\,\,\,\,\,\,\,\,\,\,\,\,\,\,\,\,\,\,\,\,\,\,
		 	\begin{minipage}{.3\textwidth}
		 		
		 		\begin{tikzpicture}[scale=0.5] 
		 		\coordinate[label=above left:A] (A) at (0,4);
		 		\coordinate (B) at (-2,3);
		 		\coordinate (C) at (-2,1);
		 		\coordinate (D) at (0,0);
		 		\coordinate (E) at (2,1);
		 		\coordinate (F) at (2,3);
		 		\fill[black] (A) circle (4pt);
		 		\fill[black] (B) circle (4pt) node[left]{B};
		 		\fill[black] (C) circle (4pt) node[below]{C};
		 		\fill[black] (D) circle (4pt) node[below]{D};
		 		\fill[black] (E) circle (4pt) node[below]{E};
		 		\fill[black] (F) circle (4pt) node[right]{F};	
		 		\draw[color=blue] (A)--(C)--(E)--(A);
		 		\draw[color=blue] (B)--(D)--(F)--(B);
		 		\draw[color=red] (A)--(D) (B)--(E) (C)--(F);
		 		\node at (0.5,-2,1) {\textit{Good NAC coloring of $C(\mathbb{Z}/6\mathbb{Z},\lbrace \pm2,\pm3\rbrace)$}};	
		 		\end{tikzpicture}	
		 	\end{minipage}	 
		 \end{center}		 
		 
		 In the above figure, the given graph is movable. We easily check that $s_{1}=\bar{2},s_{1}^{-1}= \bar{-2}=\bar{4}, \langle s_{1} \rangle = \lbrace \bar{0},\bar{2},\bar{4}\rbrace$, while $s_{2}=\bar{3}, s_{2}^{-1}=\bar{-3}=\bar{3}, \langle s_{2}\rangle = \lbrace \bar{0},\bar{3}\rbrace.$ Hence, $\langle s_{1}\rangle \cap \langle s_{2}\rangle = \lbrace \bar{0}\rbrace.$

\begin{remark}
	The complete graph, with $S\cup S^{-1} = G$ cannot satisfy the conditions of Theorem \ref{thmCay}. In fact, it is rigid, not flexible.
\end{remark}
We turn to the corollary mentioned in the introduction.
\begin{proof}[Proof of Corollary \ref{CorA1}]
	Let $G$ be a group generated by the generating set $S=\lbrace s_{1},\cdots, s_{k}\rbrace$ with $\langle a_{i}\rangle \cap \langle a_{j}\rangle = \lbrace e \rbrace \,\, \forall 1\leqslant i\neq j\leqslant k.$ Coloring the edges $a_{1}, a_{1}^{-1}$ blue and the remaining edges red, we get the result. 
\end{proof}

\section{Explicit construction of dense, movable graphs}
In this section we construct movable Cayley graphs along with a good NAC coloring of the edges. First, we give a general criterion of movability for cartesian products of graphs 
\begin{proposition}[Movability under Cartesian products]\label{CartesianProdProp}
	There exists a good NAC coloring in the cartesian product of two finite, simple graphs. In particular, cartesian product of any two finite simple graphs is movable.
\end{proposition}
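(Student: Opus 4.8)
The plan is to equip the Cartesian product with its \emph{canonical} two-colouring and to check directly that this is a good NAC-colouring, so that movability follows from Lemma~\ref{keyLem2}. Write the two factors as $\mathbf{\Gamma}_1 = (V_1,E_1)$ and $\mathbf{\Gamma}_2 = (V_2,E_2)$, and recall that the Cartesian product $\mathbf{\Gamma} = \mathbf{\Gamma}_1 \,\square\, \mathbf{\Gamma}_2$ has vertex set $V_1 \times V_2$, with $(u_1,u_2)$ adjacent to $(v_1,v_2)$ precisely when either $u_1 = v_1$ and $(u_2,v_2)\in E_2$, or $u_2 = v_2$ and $(u_1,v_1)\in E_1$. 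I would colour an edge \textbf{red} when it comes from the first factor (the second coordinate is held fixed) and \textbf{blue} when it comes from the second factor (the first coordinate is held fixed). Since both factors are non-trivial they each carry at least one edge, so both colours occur and the colouring is surjective.

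The key structural observation is that a red edge never changes the second coordinate and a blue edge never changes the first. Consequently every red path stays inside a single slice $V_1 \times \{b\}$ and every blue path inside a single slice $\{a\}\times V_2$; in particular each red connected component $R_i$ is contained in some $V_1\times\{b\}$ and each blue component $B_j$ in some $\{a\}\times V_2$. Two things then follow. First, $R_i \cap B_j \subseteq (V_1\times\{b\})\cap(\{a\}\times V_2) = \{(a,b)\}$, so $|R_i\cap B_j|\le 1$ for all $i,j$. Secondly, if two \emph{distinct} vertices were joined by both a red and a blue path, the red path would force their second coordinates to agree while the blue path would force their first coordinates to agree, making the two vertices equal, a contradiction. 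The same coordinate bookkeeping rules out almost-red and almost-blue cycles: traversing a cycle with a single blue (resp.\ red) edge would alter the second (resp.\ first) coordinate exactly once and hence fail to return to its starting vertex. Thus the colouring is a good NAC-colouring in the sense of Definition~\ref{defNAC}.

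Having produced a good NAC-colouring, movability is immediate from Lemma~\ref{keyLem2} (equivalently, the bound $|R_i\cap B_j|\le 1$ is exactly the hypothesis of Proposition~\ref{keyLem1}), and I would organise the write-up so that this intersection bound is verified first, since it is the precise input those statements require. The only point demanding genuine care, and the closest thing to an obstacle, is the confinement claim that each monochromatic component lives in a single coordinate slice; once that is in place the rest is formal. It is worth recording that connectedness of the factors is not actually needed for the intersection bound (a red component is confined to a slice whether or not $\mathbf{\Gamma}_1$ is connected), so only the presence of at least one edge in each factor is genuinely used, namely to ensure surjectivity of the colouring.
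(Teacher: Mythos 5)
Your proposal is correct and follows essentially the same route as the paper: the identical factor-based two-colouring, the same coordinate-bookkeeping argument ruling out almost-monochromatic cycles and bichromatic path-connections, and the same appeal to Lemma~\ref{keyLem2} (equivalently Proposition~\ref{keyLem1}). Your explicit verification of the slice-confinement of monochromatic components and the bound $|R_i\cap B_j|\le 1$ is a slightly more careful unpacking of what the paper's proof asserts directly, but it is the same argument.
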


\begin{proof}
	Let $\mathbf{\Gamma}_{1}$ and $\mathbf{\Gamma}_{2}$ be any two finite, simple graphs. The cartesian product of $\mathbf{\Gamma}_{1}$ and $\mathbf{\Gamma}_{2}$, denoted by $\mathbf{\Gamma}=\mathbf{\Gamma}_{1}\boxdot \mathbf{\Gamma}_{2}$ is the graph having the vertex set 
	$$V_{\mathbf{\Gamma}_{1}\boxdot \mathbf{\Gamma}_{2}}:= \lbrace (x_{1},x_{2}) : x_{1}\in V_{\mathbf{\Gamma}_{1}}, x_{2}\in V_{\mathbf{\Gamma}_{2}}\rbrace$$
	and the edge set 
	$$E_{\mathbf{\Gamma}_{1}\boxdot \mathbf{\Gamma}_{2}} := \lbrace ((x_{1},x_{2}),(y_{1},y_{2})) :  x_{1}=y_{1}\, \& \,  (x_{2},y_{2})\in E_{\mathbf{\Gamma}_{2}} \text{ or } (x_{1},y_{1})\in E_{\mathbf{\Gamma}_{1}} \, \& \, x_{2}=y_{2} \rbrace$$
	where $(x_{1},x_{2}),(y_{1},y_{2})\in V_{\mathbf{\Gamma}_{1}\boxdot \mathbf{\Gamma}_{2}}.$ 
	We give a good NAC coloring of the edges of $\mathbf{\Gamma}_{1}\boxdot \mathbf{\Gamma}_{2}$. Color the edges of $\mathbf{\Gamma}_{1}$ by red and that of $\mathbf{\Gamma}_{2}$ by blue. This induces a coloring of $E_{\mathbf{\Gamma}_{1}\boxdot \mathbf{\Gamma}_{2}}$. First, we show that this is a NAC coloring. Let $C:= \lbrace (x_{11},x_{21}), (x_{12},x_{22}), \cdots, (x_{1k},x_{2k}), (x_{11},x_{21})\rbrace$ be a cycle in $\mathbf{\Gamma}$. The presence of exactly one red edge is equivalent to saying that there exists exactly one $i$ with $1\leqslant i\leqslant k$ such that $x_{1i} \neq x_{1(i+1)}$, while for all other $1\leqslant j\neq i\leqslant k$, we have $x_{1j}=x_{1(j+1)}$. This is clearly a contradiction to the fact that $C$ is a closed cycle. Thus there cannot exist any cycle with exactly one red edge (similarly blue edge). The coloring is an NAC coloring.  To show that it is a \textit{good} NAC coloring, assume that there exist two distinct vertices $(x_{1},x_{2})$ and $(y_{1},y_{2})$ of $\mathbf{\Gamma}$ such that they can be joined by both a red path and a blue path. According to our coloring, this is equivalent to saying that $x_{2} = y _{2}$ and $x_{1}=y_{1}$ respectively. This contradicts the fact that the vertices $(x_{1},x_{2})$ and $(y_{1},y_{2})$ are distinct. We have established a good NAC coloring of the edges of $\mathbf{\Gamma}$. The movability of $\mathbf{\Gamma}$ follows.
\end{proof}

\begin{theorem}[Movable graphs of all regularity in abelian groups]\label{MovRegAb}
	Fix a positive integer $r$ and let $1<q_{1}<\cdots< q_{r}$ be any $r$ pairwise relatively prime numbers. Let $\alpha_{1},\cdots,\alpha_{r}$ be positive integers and $n=q_{1}^{\alpha_{1}}q_{2}^{\alpha_{2}}\cdots q_{r}^{\alpha_{r}}$. Then the following hold:
	\begin{enumerate}
		\item The Cayley graph of 
		$$G=(\mathbb{Z}/q_{i}\mathbb{Z})^{\alpha_{i}}\,\forall 1\leqslant i \leqslant r, \,q_{i}>2$$ 
		with respect to the generating set $S\cup S^{-1}$ with 
		$$S=\lbrace (\bar{1},0,\cdots,0), (0,\bar{1},\cdots,0),\cdots, (0,0,\cdots, \bar{1})\rbrace$$ is a $2\alpha_{i}$
		 regular graph which is movable.
		\item The Cayley graph of 
		$$G=(\mathbb{Z}/n\mathbb{Z})\simeq (\mathbb{Z}/q_{1}^{\alpha_{1}}\mathbb{Z})\times \cdots \times (\mathbb{Z}/q_{r}^{\alpha_{r}}\mathbb{Z}) $$
		 with respect to the generating set $S\cup S^{-1}$ with 
		 $$S=\lbrace (\bar{1},0,\cdots,0), (0,\bar{1},\cdots,0),\cdots, (0,0,\cdots, \bar{1})\rbrace$$ is a $2r$ regular graph (for $q_{1}^{\alpha_{1}}>2$) which is movable.
		 \item The Cayley graph of 
		 $$G=(\mathbb{Z}/2\mathbb{Z})\times(\mathbb{Z}/q_{1}^{\alpha_{1}}\mathbb{Z})\times \cdots \times (\mathbb{Z}/q_{r}^{\alpha_{r}}\mathbb{Z}) $$
		 with respect to the generating set $S\cup S^{-1}$ with 
		 $$S=\lbrace (\bar{1},0,\cdots,0), (0,\bar{1},\cdots,0),\cdots, (0,0,\cdots, \bar{1})\rbrace$$ is a $2r+1$ regular graph which is movable.
	\end{enumerate}
	
	Conversely, given any finite abelian group $G$, one can find a symmetric generating set $A$ with respect to which the undirected Cayley graph $C(G,A)$ is a movable graph.
\end{theorem}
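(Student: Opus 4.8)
The plan is to reduce the converse to the two construction tools already in hand---Part~(2) of Theorem~\ref{ThmCay} (equivalently Corollary~\ref{CorA1}) for groups that split as a nontrivial direct product, and Part~(1) of the same theorem for the remaining cyclic prime-power groups---applied to the canonical generating set furnished by the structure theorem. First I would write $G$ in its primary decomposition $G \cong \prod_{j=1}^{m}\mathbb{Z}/p_j^{a_j}\mathbb{Z}$ into cyclic factors of prime-power order, and take $S=\{e_1,\dots,e_m\}$, where $e_j$ denotes the tuple carrying $\bar 1$ in the $j$-th slot and $\bar 0$ elsewhere; the candidate symmetric generating set is $A=S\cup S^{-1}$, and $C(G,A)$ is connected and simple because $S$ generates $G$, $e\notin A$, and distinct elements of $A$ produce distinct neighbours.

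The principal case is $m\geqslant 2$. Here $\langle e_1\rangle$ is exactly the first coordinate axis while $\langle e_2,\dots,e_m\rangle$ consists of the tuples with vanishing first coordinate, so their intersection is $\{e\}$. Taking $s=e_1$, one checks that $S\setminus\{s,s^{-1}\}=\{e_2,\dots,e_m\}$ (whether or not $p_1^{a_1}=2$), so the movability hypothesis $\langle s\rangle\cap\langle S\setminus\{s,s^{-1}\}\rangle=\{e\}$ of Theorem~\ref{ThmCay}(2) holds and $C(G,A)$ is movable. The associated good NAC colouring is the expected one: edges in the $e_1$-direction blue, all others red; alternatively one may group the factors into two blocks, identify $C(G,A)$ with the corresponding Cartesian product $C(H_1,A_1)\boxdot C(H_2,A_2)$, and quote Proposition~\ref{CartesianProdProp}. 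I would point out that this already settles every cyclic group $\mathbb{Z}/n\mathbb{Z}$ whose order has at least two distinct prime divisors, since the Chinese Remainder Theorem renders its primary decomposition nontrivial.

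It remains to treat $m=1$, that is $G=\mathbb{Z}/p^{a}\mathbb{Z}$ cyclic of prime-power order, where the product construction is unavailable. For $|G|=p^{a}>3$ I would instead take $A=\{\bar 1,\overline{-1}\}$, so that $C(G,A)$ is the cycle on $|G|$ vertices, which is movable by Theorem~\ref{ThmCay}(1).

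The genuinely irreducible difficulty, and the main obstacle, is the pair of smallest groups $\mathbb{Z}/2\mathbb{Z}$ and $\mathbb{Z}/3\mathbb{Z}$: their only symmetric generating set is $\{\pm\bar 1\}$, giving $K_2$ and $K_3$ respectively, both rigid by Theorem~\ref{ThmCay}(1). These two groups carry no movable Cayley graph at all, so the statement must be understood to exclude them (as it is by the standing non-triviality conventions on the graphs considered in this paper). Modulo these two exceptional groups the case analysis is exhaustive, and every finite abelian group thus admits a symmetric generating set whose Cayley graph is movable.
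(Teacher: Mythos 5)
Your proposal, as written, proves only half of the theorem. The statement is not just the converse (``every finite abelian group admits a movable Cayley graph''): parts (1)--(3) assert that specific Cayley graphs are movable \emph{and} have specific regularities, namely $2\alpha_{i}$, $2r$ and $2r+1$. Your main case ($m\geqslant 2$ cyclic factors) does yield movability of all these graphs, since each is a direct product equipped with the standard generating set, but you never verify the degree counts, and these are where the hypotheses of the theorem actually enter: the equality $|S\cup S^{-1}|=2|S|$ needs that no standard generator is an involution (this is the role of $q_{i}>2$, resp.\ $q_{1}^{\alpha_{1}}>2$), and the entire point of part (3) --- the ``all regularity'' in the title --- is that adjoining a $\mathbb{Z}/2\mathbb{Z}$ factor contributes a generator equal to its own inverse, so that $|S\cup S^{-1}|=2r+1$ and odd degrees are realized. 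None of this appears in your write-up, so as a proof of Theorem \ref{MovRegAb} it is incomplete; the paper's own proof spends most of its effort exactly on these counts and colorings (the NAC coloring you describe is the same one the paper uses, so that part transfers verbatim).

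On the converse, your argument is correct and in fact more careful than the paper's. The paper disposes of the converse in one line: decompose $G$ by the structure theorem, take the standard generating set, and assert movability. Read literally, that is only your $m\geqslant 2$ case: it silently fails when $G$ is cyclic of prime-power order, where the prescribed graph is a cycle whose movability needs Theorem \ref{ThmCay}(1) (Laman) rather than the NAC-coloring argument, and it fails outright for $G=\mathbb{Z}/2\mathbb{Z}$ and $G=\mathbb{Z}/3\mathbb{Z}$, whose only undirected simple Cayley graphs are $K_{2}$ and $K_{3}$, both rigid. You identify both issues precisely, and your alternative route through Proposition \ref{CartesianProdProp} is also valid. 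One quibble: you claim the two exceptional groups are excluded by the paper's standing conventions. That is not accurate for $\mathbb{Z}/3\mathbb{Z}$: $K_{3}$ is non-trivial, simple, connected and non-bipartite, hence squarely within the paper's scope, so the converse as literally stated is \emph{false} for this group (the same degenerate case also infects part (1) at $\alpha_{i}=1$, $q_{i}=3$). What you have found is a genuine small error in the theorem that the paper's proof glosses over; it should be recorded as an explicit exception, not explained away by convention.
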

\begin{proof} Fix an integer $q>2$ and a positive integer $\alpha$. First note that $(\mathbb{Z}/q\mathbb{Z})^{\alpha} \not\simeq (\mathbb{Z}/q^{\alpha}\mathbb{Z})$. Hence the abelian group $(\mathbb{Z}/q\mathbb{Z})^{\alpha} $ which is a direct product of the cyclic groups $\mathbb{Z}/q\mathbb{Z}$ is non-cyclic and has (as a symmetric generating set) the set $A=S\cup S^{-1}$ where $S=\lbrace (\bar{1},0,\cdots,0), (0,\bar{1},\cdots,0),\cdots, (0,0,\cdots, \bar{1})\rbrace$. Since $q>2$, we have $|A|=|S\cup S^{-1}|=2|S|=2\alpha$. This implies that the Cayley graph $C(G,A)$ is $2\alpha$ regular. Now we construct a good NAC coloring of the edges via the map, 
	$$\lbrace (\bar{1},0,\cdots,0),(\bar{-1},0,\cdots,0)\rbrace \rightarrow \text{ blue and } (S\setminus\lbrace (\bar{1},0,\cdots,0),(\bar{-1},0,\cdots,0)\rbrace ) \rightarrow \text{ red}. $$
	It remains to show that $\langle \lbrace (\bar{1},0,\cdots,0),(\bar{-1},0,\cdots,0)\rbrace \rangle \cap \langle (S\setminus\lbrace (\bar{1},0,\cdots,0),(\bar{-1},0,\cdots,0)\rbrace )\rangle = \lbrace (0,\cdots,0)\rbrace$. But this is clear from the fact that we have a direct product of the groups (in fact, one can exploit this strategy to construct movable graphs having more edges see Theorem \ref{DenseMovCay}). In the first figure below, the Cayley graph for $q=3$ and $\alpha = 2$ is shown.
		
			\begin{minipage}{.4\textwidth}
				\begin{tikzpicture}[scale=0.5] 
				\coordinate[label=above left:A] (A) at (0,8);
				\coordinate (B) at (-3,6.5);
				\coordinate (C) at (-4,4);
				\coordinate (D) at (-2.5,1);
				\coordinate (E) at (0,0);
				\coordinate (F) at (3,1);
				\coordinate (G) at (4,4);
				\coordinate (H) at (3.5,6);
				\coordinate (I) at (2,7.5);
				\fill[black] (A) circle (4pt);
				\fill[black] (B) circle (4pt) node[left]{B};
				\fill[black] (C) circle (4pt) node[left]{C};
				\fill[black] (D) circle (4pt) node[left]{D};
				\fill[black] (E) circle (4pt) node[below]{E};
				\fill[black] (F) circle (4pt) node[below]{F};
				\fill[black] (G) circle (4pt) node[below]{G};
				\fill[black] (H) circle (4pt) node[right]{H};
				\fill[black] (I) circle (4pt) node[right]{I};
				\draw[color=blue] (A)--(B)--(C)--(A) (D)--(E)--(F)--(D) (G)--(H)--(I)--(G) ;
				\draw[color = red] (A)--(D)--(G)--(A) (B)--(E)--(H)--(B) (C)--(F)--(I)--(C);

				\node at (0,-2,1) {\textit{$G= (\mathbb{Z}/3\mathbb{Z})^{2}, S=\lbrace (\bar{1},\bar{0}),(\bar{0},\bar{1})\rbrace$ is movable}};	
				\end{tikzpicture}		
			\end{minipage}\,\,\,\,\,\,\,\,\,\,\,\,\,\,\,\,\,\,\,\,\,\,\,
			\begin{minipage}{.5\textwidth}
				
				\begin{tikzpicture}[scale=0.5] 
				\coordinate[label=above left:A] (A) at (0,8);
				\coordinate (B) at (-2,7.5);
				\coordinate (C) at (-3.5,6);
				\coordinate (D) at (-4,4);
				\coordinate (E) at (-3.5,2.5);
				\coordinate (F) at (-2.5,1);
				\coordinate (G) at (0,0);
				\coordinate (H) at (2,0.5);
				\coordinate (I) at (3.5,2);
				\coordinate (J) at (4,4);
				\coordinate (K) at (3.5,6);
				\coordinate (L) at (2,7.5);
				\fill[black] (A) circle (4pt);
				\fill[black] (B) circle (4pt) node[left]{B};
				\fill[black] (C) circle (4pt) node[left]{C};
				\fill[black] (D) circle (4pt) node[left]{D};
				\fill[black] (E) circle (4pt) node[below]{E};
				\fill[black] (F) circle (4pt) node[below]{F};
				\fill[black] (G) circle (4pt) node[below]{G};
				\fill[black] (H) circle (4pt) node[right]{H};
				\fill[black] (I) circle (4pt) node[right]{I};
				\fill[black] (J) circle (4pt) node[right]{J};
				\fill[black] (K) circle (4pt) node[right]{K};
				\fill[black] (L) circle (4pt) node[right]{L};
				\draw[color = blue] (A)--(J)--(G)--(D)--(A) (B)--(K)--(H)--(E)--(B) (C)--(F)--(I)--(L)--(C) ;

				\draw[color = red] (A)--(E)--(I)--(A) (B)--(F)--(J)--(B) (C)--(G)--(K)--(C) (D)--(H)--(L)--(D);
				\node at (0.5,-2,1) {\textit{$G= (\mathbb{Z}/4\mathbb{Z})\times{(\mathbb{Z}/3\mathbb{Z})}, S=\lbrace (\bar{1},\bar{0}),(\bar{0},\bar{1})\rbrace$ is movable}};	
				\end{tikzpicture}	
			\end{minipage}\\
		\\	 

Now, note that $(\mathbb{Z}/n\mathbb{Z})\simeq (\mathbb{Z}/q_{1}^{\alpha_{1}}\mathbb{Z})\times \cdots \times (\mathbb{Z}/q_{r}^{\alpha_{r}}\mathbb{Z})$ by the Chinese remainder theorem. We adopt the same method as in the previous case and take as generating set $A=S\cup S^{-1}$ where $S=\lbrace (\bar{1},0,\cdots,0), (0,\bar{1},\cdots,0),\cdots, (0,0,\cdots, \bar{1})\rbrace$. Here the graphs will be $2r$ regular while a good NAC coloring of the edges is given by $$\lbrace (\bar{1},0,\cdots,0),(\bar{-1},0,\cdots,0)\rbrace \rightarrow \text{ blue and } (S\setminus\lbrace (\bar{1},0,\cdots,0),(\bar{-1},0,\cdots,0)\rbrace ) \rightarrow \text{ red}. $$
In the second figure above, we have $G=\mathbb{Z}/12\mathbb{Z}\simeq (\mathbb{Z}/4\mathbb{Z})\times{(\mathbb{Z}/3\mathbb{Z})}$ with the symmetric generating set $A=\lbrace (\bar{1},\bar{0}),(\bar{3},\bar{0}),(\bar{0},\bar{2}),(\bar{0},\bar{1})\rbrace$. Note that the underlying group is the same as in example diagram of Theorem \ref{ThmCay} but the generating sets are different. Here the generating set (when translated in terms of $\mathbb{Z}/12\mathbb{Z}$) is $\lbrace \bar{9},\bar{3},\bar{4},\bar{8}\rbrace$.

Uptil now, we have constructed movable abelian Cayley graphs of degree $2r$ for any positive integer $r$. To get the graphs of odd regularity we need to add involutions to the generating set (we recall that an involution is an element of order $2$ in a group). We consider $G=(\mathbb{Z}/2\mathbb{Z})\times(\mathbb{Z}/q_{1}^{\alpha_{1}}\mathbb{Z})\times \cdots \times (\mathbb{Z}/q_{r}^{\alpha_{r}}\mathbb{Z}) $
with respect to the generating set $S\cup S^{-1}$ with 
$$S=\lbrace (\bar{1},0,\cdots,0), (0,\bar{1},\cdots,0),\cdots, (0,0,\cdots, \bar{1})\rbrace.$$ 
Arguing as before we have a movable, $2r+1$ regular, graph.

Cases $(1),(2),(3)$ cover the construction of movable, regular graphs of all degrees as claimed.\\
\\
For the converse part, let $G$ be any finite abelian group. By the fundamental theorem of finitely generated abelian groups, $$G\simeq (\mathbb{Z}/p_{1}^{\alpha_{1}}\mathbb{Z})\times \cdots \times (\mathbb{Z}/p_{t}^{\alpha_{t}}\mathbb{Z}) $$
for primes $p_{1}<\cdots <p_{t}$ and positive integers $\alpha_{1},\cdots,\alpha_{t}$. Choose $A=S\cup S^{-1}$ with \\
$S=\lbrace (\bar{1},0,\cdots,0), (0,\bar{1},\cdots,0),\cdots, (0,0,\cdots, \bar{1})\rbrace.$ This makes $C(G,A)$ a movable Cayley graph.
\end{proof}

We are now in a position to construct - 

\begin{theorem}[Generic construction of dense movable graphs]\label{DenseMovCay}
	For each positive integer $n\geqslant 2$, one can construct $2n-2$ regular, movable graphs with $n^{2}$ vertices and $n^{3}-n^{2}$ edges. Also, there exist an absolute constant $c>0$ such that there is a sequence of abelian Cayley graphs having $N$ (with $N\rightarrow \infty$) vertices and more than $cN^{2}$ edges.
\end{theorem}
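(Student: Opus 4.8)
The plan is to realise both claims as cartesian products of complete graphs, so that movability is inherited for free from Proposition \ref{CartesianProdProp} and only the vertex and edge counts remain to be verified. The single structural input I would use is that the complete graph on $n$ vertices is itself an abelian Cayley graph, $K_n = C(\mathbb{Z}/n\mathbb{Z}, \{1,\dots,n-1\})$, and that the cartesian product of two Cayley graphs $C(G_1,S_1)$ and $C(G_2,S_2)$ is the Cayley graph of $G_1\times G_2$ with respect to $(S_1\times\{e\})\cup(\{e\}\times S_2)$. This keeps everything inside the class of abelian Cayley graphs.

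For the first claim I would take $G=(\mathbb{Z}/n\mathbb{Z})^2$ with the symmetric generating set
$$A=\{(\bar{i},\bar{0}):1\leqslant i\leqslant n-1\}\cup\{(\bar{0},\bar{j}):1\leqslant j\leqslant n-1\}.$$
Since $-(\bar i,\bar 0)=(\overline{n-i},\bar 0)\in A$, the set $A$ is symmetric, omits the identity, and satisfies $|A|=2(n-1)=2n-2$; hence $C(G,A)$ is $(2n-2)$-regular on $n^2$ vertices, with $\tfrac{1}{2}\,n^2(2n-2)=n^3-n^2$ edges. The key observation is that the adjacency condition $b-a\in A$ unwinds to ``$b_1\neq a_1,\ b_2=a_2$ or $b_1=a_1,\ b_2\neq a_2$'', so $C(G,A)$ is exactly the product $K_n\boxdot K_n$. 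Proposition \ref{CartesianProdProp} then delivers movability at once, completing the first claim.

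For the second claim I would break the symmetry of the product and take the sequence of prisms $\mathbf{\Gamma}_n=K_n\boxdot K_2$, realised as the abelian Cayley graph $C(\mathbb{Z}/n\mathbb{Z}\times\mathbb{Z}/2\mathbb{Z},A')$ with $A'=\{(\bar i,\bar 0):1\leqslant i\leqslant n-1\}\cup\{(\bar 0,\bar 1)\}$ (again symmetric, since $-( \bar 0,\bar 1)=(\bar 0,\bar 1)$ in $\mathbb{Z}/2\mathbb{Z}$). By the product edge-count, $\mathbf{\Gamma}_n$ has $N=2n$ vertices and $2\binom{n}{2}+n=n^2=N^2/4$ edges, and it is movable by Proposition \ref{CartesianProdProp}. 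Letting $n\to\infty$ produces abelian Cayley graphs with $N\to\infty$ and $N^2/4$ edges, so any constant $c<1/4$ (for instance $c=1/5$) satisfies the required bound $|E|>cN^2$.

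The construction is essentially immediate once Proposition \ref{CartesianProdProp} is available; the only genuine decision — and the step most easily gotten wrong — is the choice of factors for the second claim. The symmetric product $K_n\boxdot K_n$ used for the first claim carries only $n^3-n^2\sim N^{3/2}$ edges, i.e.\ density $3/2$, and does \emph{not} reach quadratic density, because pairing two large complete graphs ``wastes'' vertices (and because the complete graphs themselves are rigid, one cannot simply take a single dense complete factor). The point is that forcing one factor $K_n$ to grow while holding the other factor at the minimum $K_2$ pushes the edge count up to a constant fraction of $\binom{N}{2}$ while preserving movability. One minor caveat worth recording is that for $n\geqslant 3$ these products are non-bipartite (as required), whereas the boundary case $n=2$ gives the bipartite $4$-cycle $C_4$ and should be noted or excluded separately.
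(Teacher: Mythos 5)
Your proposal is correct and takes essentially the same approach as the paper: both parts are cartesian products of complete abelian Cayley graphs made movable by the product good-NAC-coloring (the paper writes this coloring directly on the generating sets of $(\mathbb{Z}/n\mathbb{Z})\times(\mathbb{Z}/n\mathbb{Z})$ and $(\mathbb{Z}/n\mathbb{Z})\times(\mathbb{Z}/n^{k}\mathbb{Z})$, and then remarks explicitly that this is the same as invoking Proposition \ref{CartesianProdProp} on complete Cayley graphs). Your only deviation is in the second claim, where you fix the small factor at $K_{2}$ (prisms $K_{n}\boxdot K_{2}$, exactly $N^{2}/4$ edges) rather than the paper's fixed $K_{n}$ with growing $K_{n^{k}}$; this is a reparametrization of the same family (the paper's $n=2$ case is a subsequence of yours) and yields the same quadratic density with an equally clean constant.
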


\begin{proof}
	Before we proceed with the proof, we mention that there is a distinction between the two statements of the theorem. A priori, the second statement already ensures the construction of a denser sequence of movable graphs but in the first statement one can get the stronger control on the number of vertices of each graph in the sequence but at the cost of reducing the density $\alpha$. Now, we proceed with the proof.\\ 
	We construct the graphs as Cayley graph of the abelian group $G=(\mathbb{Z}/n\mathbb{Z})\times (\mathbb{Z}/n\mathbb{Z})$. Consider the symmetric generating set 
	$$A=[((\mathbb{Z}/n\mathbb{Z})\times \lbrace 0\rbrace) \setminus \lbrace 0,0\rbrace] \cup [(\lbrace 0\rbrace \times (\mathbb{Z}/n\mathbb{Z}))\setminus \lbrace 0,0\rbrace]$$ of $G$. It is clear that $A$ is indeed a generating set and also the fact that $|A|= 2n-2$. Also $|G|=n^{2}$. Hence, the number of edges of $G$ is $n^{2}(n-1)$. A good NAC coloring of the edges is given by 
	\begin{align*}
		[((\mathbb{Z}/n\mathbb{Z})\times \lbrace 0\rbrace) \setminus \lbrace 0,0\rbrace] & \rightarrow \text{ blue}\\
		[(\lbrace 0\rbrace \times (\mathbb{Z}/n\mathbb{Z}))\setminus \lbrace 0,0\rbrace] & \rightarrow \text{ red}
	\end{align*}
We easily check that $\langle 	[((\mathbb{Z}/n\mathbb{Z})\times \lbrace 0\rbrace) \setminus \lbrace 0,0\rbrace] \rangle \cap \langle [(\lbrace 0\rbrace \times (\mathbb{Z}/n\mathbb{Z}))\setminus \lbrace 0,0\rbrace] \rangle = \lbrace 0,0\rbrace $. Thus the graph is movable. We illustrate with an example for $n=4$ below.

\begin{center}
	\begin{tikzpicture}[scale=0.6] 
	\coordinate[label=above left:A] (A) at (0,8);
	\coordinate (B) at (-1.57,7.67);
	\coordinate (C) at (-2.77,6.88);
	\coordinate (D) at (-3.7,5.53);
	\coordinate (E) at (-4,4);
	\coordinate (F) at (-3.8,2.75);
	\coordinate (G) at (-3.15,1.53);
	\coordinate (H) at (-2.1,0.6);
	\coordinate (I) at (0,0);
	\coordinate (J) at (1.75,0.4);
	\coordinate (K) at (3,1.33);
	\coordinate (L) at (3.76, 2.63);
	\coordinate (M) at (4,4);
	\coordinate (N) at (3.8,5.7);
	\coordinate (O) at (2.9,7);
	\coordinate (P) at (1.56, 7.68);
	\fill[black] (A) circle (3pt);
	\fill[black] (B) circle (3pt) node[left]{B};
	\fill[black] (C) circle (3pt) node[left]{C};
	\fill[black] (D) circle (3pt) node[left]{D};
	\fill[black] (E) circle (3pt) node[left]{E};
	\fill[black] (F) circle (3pt) node[left]{F};
	\fill[black] (G) circle (3pt) node[left]{G};
	\fill[black] (H) circle (3pt) node[below]{H};
	\fill[black] (I) circle (3pt) node[below]{I};
	\fill[black] (J) circle (3pt) node[below]{J};
	\fill[black] (K) circle (3pt) node[below]{K};
	\fill[black] (L) circle (3pt) node[right]{L};
	\fill[black] (M) circle (3pt) node[right]{M};
	\fill[black] (N) circle (3pt) node[below]{N};
	\fill[black] (O) circle (3pt) node[below]{O};
	\fill[black] (P) circle (3pt) node[right]{P};
	\draw[color=blue] (A)--(B)--(C)--(D)--(A)	(B)--(D) (C)--(A);
	\draw[color=blue] (E)--(F)--(G)--(H)--(E)	(F)--(H) (G)--(E);
	\draw[color=blue] (I)--(J)--(K)--(L)--(I)	(J)--(L) (K)--(I);
	\draw[color=blue] (M)--(N)--(O)--(P)--(M)	(N)--(P) (O)--(M);
	\draw[color=red] (A)--(E)--(I)--(M)--(A)	(E)--(M) (I)--(A);
	\draw[color=red] (B)--(F)--(J)--(N)--(B)	(B)--(J) (F)--(N);
	\draw[color=red] (C)--(G)--(K)--(O)--(C)	(C)--(K) (G)--(O);
	\draw[color=red] (D)--(H)--(L)--(P)--(D)	(D)--(L) (P)--(H);
	\node at (0,-2,1) {\textit{$G= (\mathbb{Z}/4\mathbb{Z})^{2}, A=\lbrace (\mathbb{Z}/4\mathbb{Z},\bar{0}),(\bar{0},\mathbb{Z}/4\mathbb{Z})\rbrace\setminus\lbrace \bar{0},\bar{0}\rbrace.$}};	
	\end{tikzpicture}		
\end{center}

We now address the further part. Fix $n\geqslant 2$ and consider the Cayley graphs of $G=(\mathbb{Z}/n\mathbb{Z})\times (\mathbb{Z}/n^{k}\mathbb{Z})$ where $k\in \mathbb{N}$ with respect to the generating set
$$A=[((\mathbb{Z}/n\mathbb{Z})\times \lbrace 0\rbrace) \setminus \lbrace 0,0\rbrace] \cup [(\lbrace 0\rbrace \times (\mathbb{Z}/n^{k}\mathbb{Z}))\setminus \lbrace 0,0\rbrace]$$ of $G$. It is clear that $A$ is indeed a generating set and also the fact that $|A|= n + n^{k}-2$. Also $|G|=n^{k+1}$. Hence, the number of edges of $C(G,A)$ is $\frac{n^{k+1}(n+n^{k}-2)}{2} = |G|^{2}(\frac{1}{2n^{k}}+\frac{1}{2n}-\frac{1}{n^{k+1}})$. A good NAC coloring of the edges is given by 
\begin{align*}
[((\mathbb{Z}/n\mathbb{Z})\times \lbrace 0\rbrace) \setminus \lbrace 0,0\rbrace] & \rightarrow \text{ blue}\\
[(\lbrace 0\rbrace \times (\mathbb{Z}/n^{k}\mathbb{Z}))\setminus \lbrace 0,0\rbrace] & \rightarrow \text{ red}
\end{align*}
We easily check that $\langle 	[((\mathbb{Z}/n\mathbb{Z})\times \lbrace 0\rbrace) \setminus \lbrace 0,0\rbrace] \rangle \cap \langle [(\lbrace 0\rbrace \times (\mathbb{Z}/n^{k}\mathbb{Z}))\setminus \lbrace 0,0\rbrace] \rangle = \lbrace 0,0\rbrace $. Thus the graph is movable. 

We have constructed movable Cayley graphs with $N= n^{k+1}$ vertices and $N^{2}(\frac{1}{2n^{k}}+\frac{1}{2n}-\frac{1}{n^{k+1}})$ edges. Let $\frac{1}{2n}> c > 0$ and $ k \rightarrow \infty$. Then the number of edges of $C(G,A)$ is  $\geqslant cN^{2}$. Thus, as $k\rightarrow \infty$, we have a sequence of finite, moving, regular, simple graphs on $N$ vertices and more than $cN^{2}$ edges. This give us asymptotically the densest possible regular, moving graphs.
\end{proof}

\begin{remark}
	The construction in the previous theorem can be made more general using $G=(\mathbb{Z}/m\mathbb{Z})\times (\mathbb{Z}/n^{k}\mathbb{Z})$. In that case, the number of vertices becomes $mn^{k}$ and the number of edges $\frac{mn^{k}(m+n^{k}-2)}{2}$. This is actually the same as taking the cartesian products of the complete Cayley graphs of $(\mathbb{Z}/m\mathbb{Z})$ and $(\mathbb{Z}/n^{k}\mathbb{Z})$ [see Proposition \ref{CartesianProdProp}].
\end{remark}

We move onto construction of movable graphs in non-abelian groups and in particular the finite simple groups of Lie type.
\begin{theorem}[Movable graphs in non-abelian groups]\label{DensestMovCay}
	Let $G= SL_{n}(\mathbb{F}_{p})$ where $p$ is a prime and  $\mathbb{F}_{p}=\mathbb{Z}/p\mathbb{Z}$ denotes a finite field with $p$ elements. Then the following hold:	
	\begin{enumerate}
		\item Let $\forall 1\leqslant i, j\leqslant n $  
		$$e_{i,j}:= \text{ the matrix with 1 in the (i,j)th position and zero elsewhere }$$ while $E_{i,j} := I + e_{i,j}$ denote the elementary matrices. Then the Cayley graphs $C(G,A)$ with $A=S\cup S^{-1}$ where $S = \lbrace E_{i,i+1}, E_{i+1,i}: 1\leqslant i \leqslant n\rbrace $ are movable of regularity $4(n-1)$ (for $p>2$) and of regularity $2(n-1)$ for $p=2$.
		\item  One can construct dense movable graphs on $|SL_{n}(\mathbb{F}_{p})|=\frac{1}{p-1}\prod_{k=0}^{k=n-1}(p^{n}-p^{k})$ vertices of regularity $2(p^{\frac{n(n-1)}{2}}-1)$ and number of edges $N(p^{\frac{n(n-1)}{2}}-1)$. In particular, keeping $p$ fixed and $n\rightarrow \infty$, we get regular, movable graphs on $N$ vertices and with $O(N^{\frac{3}{2}})$ edges. 
		\item There exists a constant $c$ such that  one can construct a sequence of movable Cayley graphs coming from finite, simple special linear groups having $N$ vertices and more than $cN^{2}$ edges. Thus, asymptotically we obtain regular, movable graphs on $N$ vertices and $O(N^{2})$ edges in non-abelian groups.
	\end{enumerate}
\end{theorem}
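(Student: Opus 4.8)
The plan is to prove all three parts by the single mechanism that runs through the paper: exhibit a partition of the generating set into a blue part $R$ and a red part $B$ whose generated subgroups satisfy $\langle R\rangle\cap\langle B\rangle=\{e\}$, so that Theorem~\ref{ThmCay}(3) (equivalently the good NAC colouring criterion of Lemma~\ref{keyLem2}) forces movability. Only the \emph{sizes} of the two colour classes change from part to part, and these control the regularity and the edge count. The unifying structural fact is that in $SL_n(\mathbb{F}_p)$ the group $U$ of upper unitriangular matrices and the group $L$ of lower unitriangular matrices satisfy $U\cap L=\{I\}$, since a matrix that is simultaneously upper and lower unitriangular is the identity.

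For part (1) I would take $S_{1}=\{E_{i,i+1}:1\leqslant i\leqslant n-1\}$ coloured blue and $S_{2}=\{E_{i+1,i}:1\leqslant i\leqslant n-1\}$ coloured red. Every element of $\langle S_{1}\rangle$ is upper unitriangular and every element of $\langle S_{2}\rangle$ is lower unitriangular, so $\langle S_{1}\rangle\cap\langle S_{2}\rangle\subseteq U\cap L=\{I\}$, and movability follows from Theorem~\ref{ThmCay}(3). It then remains only to record that $S\cup S^{-1}$ generates $G$ (the classical fact that the adjacent transvections $E_{i,i+1},E_{i+1,i}$ generate $SL_n(\mathbb{F}_p)$), and to count degrees: for $p>2$ the $2(n-1)$ matrices of $S$ and their $2(n-1)$ distinct inverses give regularity $4(n-1)$, while for $p=2$ each generator is an involution, $A=S$, and the regularity is $2(n-1)$.

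For part (2) I would push the same colouring to its extreme: colour $U\setminus\{I\}$ blue and $L\setminus\{I\}$ red, so that $A=(U\setminus\{I\})\cup(L\setminus\{I\})$ is already symmetric. Since a nontrivial group is generated by its non-identity elements, $\langle U\setminus\{I\}\rangle=U$ and $\langle L\setminus\{I\}\rangle=L$; these contain every transvection $E_{i,j}$ and hence generate $SL_n(\mathbb{F}_p)$, while $U\cap L=\{I\}$ again yields a good NAC colouring. As $|U|=p^{n(n-1)/2}$, the regularity is $2(p^{n(n-1)/2}-1)$ and the edge count is $\tfrac12 N\cdot 2(p^{n(n-1)/2}-1)=N(p^{n(n-1)/2}-1)$. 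The asymptotic claim then reduces to the order estimate $N=|SL_n(\mathbb{F}_p)|\asymp p^{\,n^{2}-1}$ together with $|U|=p^{(n^{2}-n)/2}$, which give $|U|=N^{1/2+o(1)}$ and hence $O(N^{3/2})$ edges as $n\to\infty$ with $p$ fixed.

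The genuinely different, and hardest, part is (3): the method above cannot by itself reach density $2$ for a single simple group, because a good NAC colouring forces $|\langle R\rangle|\,|\langle B\rangle|\leqslant|G|$, so one colour class is a fixed positive proportion of $G$ only if its subgroup has bounded index — which no proper subgroup of a non-abelian simple $SL_n/PSL_n$ has. The resolution, mirroring the abelian construction of Theorem~\ref{DenseMovCay} and its cartesian-product reformulation, is to pass to a direct product of two finite simple special linear groups: fix a small one, $G_{1}$ of order $m$, let $G_{2}$ run through larger simple special linear groups of order $M\to\infty$, set $G=G_{1}\times G_{2}$ and $A=\big((G_{1}\setminus\{e\})\times\{e\}\big)\cup\big(\{e\}\times(G_{2}\setminus\{e\})\big)$. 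Now $\{e\}\times G_{2}$ has bounded index $m$ in $G$, which is exactly the feature a single simple group lacks. This $A$ realises the cartesian product of the two complete Cayley graphs, so movability is immediate from Proposition~\ref{CartesianProdProp} (and directly from $\langle G_{1}\times\{e\}\rangle\cap\langle\{e\}\times G_{2}\rangle=\{(e,e)\}$). Here $N=mM$ and $|A|=m+M-2$, so the edge count $\tfrac12 N|A|\sim\tfrac1{2m}N^{2}$ exceeds $cN^{2}$ for any $c<\tfrac1{2m}$ once $M$ is large, giving density $2$; the same product mechanism, with $G_{2}$ of arbitrary admissible order, yields the regularity statement of Corollary~\ref{CorA2}. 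The main obstacle to keep honest is precisely this index obstruction, i.e.\ attributing the density-$2$ bound to the product construction rather than to a single simple factor.
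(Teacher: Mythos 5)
Your proposal is correct and takes essentially the same route as the paper: the same upper/lower adjacent-transvection colouring for part (1), the same colouring of the full unitriangular subgroups $U\setminus\{I\}$, $L\setminus\{I\}$ for part (2), and the same cartesian-product-of-complete-Cayley-graphs device (one factor of fixed order, the other growing) for part (3), where the paper takes specifically $SL_{n}(\mathbb{F}_{p})\times SL_{n^{k}}(\mathbb{F}_{p})$ with $k\to\infty$. Your extra observation that $\langle R\rangle\cap\langle B\rangle=\{e\}$ forces $|\langle R\rangle|\,|\langle B\rangle|\leqslant |G|$, so that density $2$ is unattainable inside a single simple group and the product construction is genuinely necessary, is a worthwhile justification that the paper leaves implicit.
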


\begin{proof}
	First part:	We first show that the Cayley graph $C(G,A)$ is connected, i.e., the set $A= S\cup S^{-1}$ where $S = \lbrace E_{i,i+1}, E_{i+1,i}: 1\leqslant i \leqslant n\rbrace $ is a generating set for $G= SL_{n}(\mathbb{F}_{p})$. Notice  that $(E_{i,j})^{\gamma}= I + \gamma e_{i,j},\,\forall \gamma \in \mathbb{Z}\setminus \lbrace 0\rbrace$. So if we can construct all the elementary matrices $E_{ij}$ from the set $A$ then we are done. The commutator $[E_{1,2},E_{2,3}] = E_{1,3}$ and in fact, for all $1\leqslant i \leqslant n$, 
		\begin{align}
		E_{i,i+2} &= [E_{i,i+1},E_{i+1,i+2}] \notag\\
		E_{i,i+3} &= [E_{i,i+2},E_{i+2,i+3}]\notag\\
		&\;\;\vdots \notag \\
		E_{i,n} &= [E_{i,n-1},E_{n-1,n}]\notag
		\end{align}
		Thus we get all the $E_{i,j}$'s in the upper triangular portion. A similar argument (considering the matrices $E_{i+1,i}$) gives us all the $E_{i,j}$'s in the lower triangular portion as well. This proves that $A=S\cup S^{-1}$ is a symmetric generating set of $G$. Now we establish a good NAC coloring of the elements in $A$. \\
		Let $S_{1}= \lbrace E_{i,i+1}:1\leqslant i\leqslant n \rbrace \subset S$ and  $S_{2}= \lbrace E_{i+1,i}:1\leqslant i\leqslant n \rbrace \subset S$. Let $A_{1}=S_{1}\cup S_{1}^{-1}$ and $A_{2}=S_{2}\cup S_{2}^{-1}$. It is easy to see that $A=A_{1}\cup A_{2}.$ Color the edges by
		$$A_{1}\rightarrow \text{ blue and } A_{2}\rightarrow \text{ red}$$
		We have that $\langle A_{1}\rangle = $ subgroup of all upper triangular matrices in $SL_{n}(\mathbb{F}_{p})$ while $\langle A_{2}\rangle = $ subgroup of all lower triangular matrices in $SL_{n}(\mathbb{F}_{p})$. Thus $\langle A_{1}\rangle \cap \langle A_{2}\rangle = \lbrace I \rbrace $. Hence, we have that the coloring is a good NAC coloring and the graph is movable. To compute the regularity we need to compute $|S\cup S^{-1}|$. If $p=2$, $S=S^{-1}$. Hence the regularity is $2(n-1)$. If $p>2$, then $S\cap S^{-1}= \emptyset$. This implies that $|S\cup S^{-1}| = 2|S| = 4(n-1)$.\\
		\\
		Proof of second part: From the above, we also see that we can extend the generating set $A$. 
		Consider the generating set $A'=S'\cup S'^{-1}$ where $S'=S_{1}'\cup S_{2}'$ and
		\begin{align*}
		S_{1}' & :=\lbrace A\in SL_{n}(\mathbb{F}_{p}): A_{i,i}=1, A_{i,j}=0\,\, \forall \, 1\leqslant i<j\leqslant n\rbrace \setminus \lbrace I \rbrace\\
		S_{2}' & :=  \lbrace A\in SL_{n}(\mathbb{F}_{p}): A_{i,i}=1, A_{i,j}=0\,\, \forall \, 1\leqslant j<i\leqslant n\rbrace \setminus \lbrace I \rbrace 
		\end{align*}
		Let as before $A_{1}' = S_{1}'\cup S_{1}'^{-1}$ and $A_{2}' = S_{2}'\cup S_{2}'^{-1}$. Then a good NAC coloring of the generating set $A'= A_{1}'\cup A_{2}'$ is given by
		$$A_{1}'\rightarrow \text{ blue and } A_{2}'\rightarrow \text{ red}.$$
		To compute the bounds on the number of vertices and the number of edges, note that the size of the group $|SL_{n}(\mathbb{F}_{p})| = \frac{1}{p-1}\prod_{k=0}^{k=n-1}(p^{n}-p^{k})$. Hence, the number of vertices is $N=\frac{1}{p-1}\prod_{k=0}^{k=n-1}(p^{n}-p^{k}).$ The graphs are regular and the regularity is given by $|A'|=|A_{1}'\cup A_{2}'|=|S_{1}'|+|S_{2}'|= 2(p^{\frac{n(n-1)}{2}}-1).$ We know that the number of edges in an $m$ vertex, $d$ regular graph is $\frac{md}{2}$. From this we see that the number of edges in $C(G,A')$ is $N(p^{\frac{n(n-1)}{2}}-1)$. Keeping $p$ fixed and $n\rightarrow \infty$ we see that $p^{\frac{n(n-1)}{2}}-1 = O(N^{\frac{1}{2}})$. Thus the number of edges is $O(N^{\frac{3}{2}})$ as $n\rightarrow \infty$ in the $N$ vertex Cayley graph.\\
		\\
		Proof of third part: Fix a prime $p$, natural numbers $n\geqslant 2, k\geqslant 1$. We shall use Proposition \ref{CartesianProdProp} to construct cartesian products of the complete Cayley graphs of $SL_{n}(\mathbb{F}_{p})$ and $SL_{n^{k}}(\mathbb{F}_{p})$. Let $G=SL_{n}(\mathbb{F}_{p})\times SL_{n^{k}}(\mathbb{F}_{p})$ and the symmetric generating set 
		$$A:=[(SL_{n}(\mathbb{F}_{p})\times \lbrace I_{n^{k}}\rbrace) \setminus \lbrace I_{n},I_{n^{k}}\rbrace] \cup [(\lbrace I_{n}\rbrace \times SL_{n^{k}}(\mathbb{F}_{p}))\setminus \lbrace I_{n},I_{n^{k}}\rbrace].$$		
		We now color the edges of $C(G,A)$ by 	
		\begin{align*}
			(SL_{n}(\mathbb{F}_{p})\times \lbrace I_{n^{k}}\rbrace) \setminus \lbrace I_{n},I_{n^{k}}\rbrace \rightarrow & \text{ blue, }\\
			(\lbrace I_{n}\rbrace \times SL_{n^{k}}(\mathbb{F}_{p}))\setminus \lbrace I_{n},I_{n^{k}}\rbrace\rightarrow & \text{ red}.
		\end{align*} 
		This is a good NAC coloring and $C(G,A)$ is movable. Now for the bounds. To avoid messy computations using constants we shall use an asymptotic order argument. Let $M_{1}=|SL_{n}(\mathbb{F}_{p})| = O(p^{n^{2}}), M_{2}= |SL_{n^{k}}(\mathbb{F}_{p})| = O(p^{n^{2k}})$. We know that $|G| = M_{1}M_{2} = O(p^{n^{2k}+n^{2}})$ and the degree of $C(G,A) = M_{1}+M_{2} -2 = O(p^{n^{2k}}).$ Thus the number of edges of $C(G,A)$, 
		$$|E| = \frac{1}{2}M_{1}M_{2}(M_{1}+M_{2}-2) = O(p^{2n^{2k}+n^{2}}) = O(p^{n^{2}})O(p^{2n^{2k}}).$$
		Keeping $n,p$ fixed and $k\rightarrow \infty$, we get a sequence of regular graphs on $N = |G|$ vertices and $O(N^{2})$ edges [since $|E| = O(p^{2n^{2k}+n^{2}}) = O(p^{n^{2k}+n^{2}}).O(p^{n^{2k}+n^{2}}) = O(N).O(N)=O(N^{2})$, when $p,n$ are fixed]. We have shown that there exists a constant $c$ such that  one can construct a sequence of movable Cayley graphs coming from finite, simple special linear groups having $N$ vertices and more than $cN^{2}$ edges.		

 As an illustration, we draw an explicit good NAC coloring of the Cayley graph of the group $SL_{2}(\mathbb{F}_{3})$ with respect to the generating set $A = S\cup S^{-1}$ where $S = \Big\lbrace \begin{pmatrix}
 1 & 1\\
 0 & 1
 \end{pmatrix}, \begin{pmatrix}
 1 & 0 \\
 1 & 1
 \end{pmatrix}\Big\rbrace .$\\

\begin{center}

		\begin{tikzpicture}[scale=0.8] 
		\coordinate[label=above left:A] (A) at (0,12);
		\coordinate (B) at (-1.6,11.8);
		\coordinate (C) at (-2.82,11.3);
		\coordinate (D) at (-4.2,10.3);
		\coordinate (E) at (-5.1,9.15);
		\coordinate (F) at (-5.7,7.8);
		\coordinate (G) at (-6.5,6);
		\coordinate (H) at (-6.8,4.5);
		\coordinate (I) at (-6.2,3.1);
		\coordinate (J) at (-4.33,1.8);
		\coordinate (K) at (-3.17,1);
		\coordinate (L) at (-1.92, 0.3);
		\coordinate (M) at (0,0);
		\coordinate (N) at (1.55,0.15);
		\coordinate (O) at (4,0.9);
		\coordinate (P) at (5.16,1.7);
		\coordinate (Q) at (5.1,2.9);
		\coordinate (R) at (5.8, 4.5);
		\coordinate (S) at (6,6);
		\coordinate (T) at (5.8,7.6);
		\coordinate (U) at (5.4,9);
		\coordinate (V) at (4.6,10.1);
		\coordinate (W) at (3.6,11);
		\coordinate (X) at (1.85,11.7);
		\fill[black] (A) circle (4pt);
		\fill[black] (B) circle (4pt) node[left]{B};
		\fill[black] (C) circle (4pt) node[left]{C};
		\fill[black] (D) circle (4pt) node[left]{D};
		\fill[black] (E) circle (4pt) node[left]{E};
		\fill[black] (F) circle (4pt) node[left]{F};
		\fill[black] (G) circle (4pt) node[left]{G};
		\fill[black] (H) circle (4pt) node[left]{H};
		\fill[black] (I) circle (4pt) node[left]{I};
		\fill[black] (J) circle (4pt) node[left]{J};
		\fill[black] (K) circle (4pt) node[left]{K};
		\fill[black] (L) circle (4pt) node[left]{L};
		\fill[black] (M) circle (4pt) node[left]{M};
		\fill[black] (N) circle (4pt) node[below]{N};
		\fill[black] (O) circle (4pt) node[below]{O};
		\fill[black] (P) circle (4pt) node[below]{P};
		\fill[black] (Q) circle (4pt) node[right]{Q};
		\fill[black] (R) circle (4pt) node[right]{R};
		\fill[black] (S) circle (4pt) node[right]{S};
		\fill[black] (T) circle (4pt) node[right]{T};
		\fill[black] (U) circle (4pt) node[right]{U};
		\fill[black] (V) circle (4pt) node[right]{V};
		\fill[black] (W) circle (4pt) node[right]{W};
		\fill[black] (X) circle (4pt) node[right]{X};
	\draw[color=blue] (A)--(B)--(C)--(A) (D)--(E)--(F)--(D)	(V)--(L)--(G)--(V) (P)--(J)--(H)--(P) (N)--(X)--(I)--(N) (U)--(R)--(K)--(U) (W)--(Q)--(M)--(W) (T)--(O)--(S)--(T) ;
\draw[color=red] (A)--(X)--(W)--(A)	(T)--(B)--(G)--(T) (H)--(C)--(R)--(H) (U)--(D)--(V)--(U) (I)--(E)--(S)--(I) (F)--(Q)--(J)--(F) (K)--(M)--(O)--(K) (L)--(N)--(P)--(L);

		\node at (0,-2,1) {\textit{$G= SL_{2}(\mathbb{F}_{3}), A = S\cup S^{-1}$ where $S = \Big\lbrace \begin{pmatrix}
				1 & 1\\
				0 & 1
				\end{pmatrix}, \begin{pmatrix}
				1 & 0 \\
				1 & 1
				\end{pmatrix}\Big\rbrace$. $C(G,A)$ is movable.}};	
		\end{tikzpicture}		
\end{center}
We note that in the above example, the number of edges $|E| = 48> 2|V|-3 $, so it doesn't satisfy Laman's criterion cf. Theorem \ref{Lamcrit}. \end{proof}

Finally, we give the proof of the existence of graphs of all regularity coming from non-abelian special linear groups.
\begin{proof}[Proof of corollary \ref{CorA2}]
	Let $r$ denote the regularity of the graph. Let $\forall 1\leqslant i, j\leqslant n $ and $1\leqslant k\neq i, l\neq j\leqslant n$, 
	$$e_{i,j}:= \lbrace A\in SL_{n}(\mathbb{F}_{p}): A_{i,j}=1, A_{k,l}=0 \rbrace $$ while $E_{i,j} := I + e_{i,j}$.
	\begin{enumerate}
		\item $r=2\rho$ (even) with $\rho \geqslant 1$. Consider $G=SL_{\rho+1}(\mathbb{F}_{2})$ and consider the symmetric generating set  $S\cup S^{-1}$ with  $S = \lbrace E_{i,i+1}, E_{i+1,i}: 1\leqslant i \leqslant n\rbrace $. Clearly $S=S^{-1}$, hence, $|S\cup S^{-1}| = |S| = 2\rho = r$. This is a moving graph by Theorem \ref{DensestMovCay} and the graph is of regularity $r$.
		\item $r=2\rho + 1$ (odd) with $\rho \geqslant 1$.  Consider $G=SL_{\rho+1}(\mathbb{F}_{2})$ with the symmetric generating set  $S\cup S^{-1}$ with  $S = \lbrace E_{i,i+1}, E_{i+1,i}: 1\leqslant i \leqslant n\rbrace \cup E_{1,3} $. Then $S=S^{-1}$ and $|S\cup S^{-1}| = |S| = 2\rho +1 = r.$ This is also a moving graph by Theorem \ref{DensestMovCay} and we are done.
	\end{enumerate}\end{proof}

\section{Concluding remarks and further questions}
We conclude by drawing attention to several questions which are open. We know that the complete graph is rigid. What about graphs which ``approximate" complete graphs e.g., expander graphs? Informally, an expander is a sequence of bounded degree graphs but with increasing number of vertices which has strong connectivity properties. Random graphs were known to be expanders since the works of Kolmogorov-Barzdin \cite{KB67} and also that of Pinsker \cite{Pin73} . The first explicit construction of an expander was given by Margulis in \cite{Mar82}.
To get an overview of how expanders approximate complete graphs, see Daniel A. Spielman's notes \cite{Spi15} or Oded Goldreich notes \cite{OG}.

\begin{Openproblem}
	Let $(\mathbf{\Gamma}_{n})_{n\in \mathbb{N}}$ denote a sequence of constant degree $d$ expander graphs. Is it true that almost every member of this sequence is a movable graph? 
\end{Openproblem}
\begin{remark}
	There exist constant degree expander graphs for which every member of the sequence is a movable graph e.g., in $SL_{2}(\mathbb{Z})$, take the Cayley graphs $$\mathbf{\Gamma}_{p} = C\Big(SL_{2}(\mathbb{Z}/p\mathbb{Z}), \Big\lbrace\begin{pmatrix}
	1 & 2\\
	0 & 1
	\end{pmatrix}^{\pm 1}, \begin{pmatrix}
	1 & 0\\
	2 & 1
	\end{pmatrix}^{\pm 1}\Big\rbrace\Big)$$ for $p$ running over the primes. These are the famous expanders of Margulis \cite{Mar82}. They are moving by Theorem \ref{DensestMovCay}. This phenomenon also occurs in  $SL_{n}(\mathbb{Z})$ for $n\geqslant 3$. For example, the graphs given by the Main Theorem in \cite{AB18} are moving by Theorem \ref{DensestMovCay}.
\end{remark}

The next question deals with movability and graph operations like squaring. By definition, the square graph $\mathbf{\Gamma}^{2}$ of an undirected graph $\mathbf{\Gamma} = (V,E)$ is the graph obtained by keeping the same vertex set $V$ while there is an edge $(u,w)$ in $\mathbf{\Gamma}^{2}$ iff there are two edges $(u,v)$, $(v,w)$ in $\mathbf{\Gamma}$ for some $v\in V$. 
\begin{Openproblem}\label{Q2}
	Is there any relationship between the rigidity, flexibility and movability of $\mathbf{\Gamma}$ and that of $\mathbf{\Gamma}^{2}$?\footnote{a slight modification is needed as we are discussing rigidity, flexibility and movability of loopless graphs whereas the square graph contains loops. When we pose this question, we mean the modified square graph with the loops removed.}
\end{Openproblem}

The above question and it's resolution is intrinsically related to the following: A method to construct infinitely many non-vertex transitive graphs which are moving. We know that the Cayley graph is vertex transitive. Adding an ``outer" edge will keep the movable property unchanged and make the graph non-vertex transitive. However, this is a sort of ad hoc construction of non-vertex transitive graphs. A better method will be to study undirected Cayley sum graphs. These are graphs defined on groups (equipped with a generating set for each group) such that there is an edge between $x$ and $y$ iff $x+y$ and $y+x$ belong to the generating set. Cayley sum graphs are non-vertex transitive in general. 

\begin{Openproblem}\label{Q3}
	What about conditions under which Cayley sum graphs are moving?
\end{Openproblem} 

Open Question \ref{Q3} is related to Open Question \ref{Q2} via the following correspondence: A Cayley sum graph $C_{\Sigma}(G,S)$ of a group $G$ is undirected iff the generating set $S$ is closed under conjugation. Under this condition, one can show that the square graph $\mathbf{\Gamma}^{2}$ of $\mathbf{\Gamma}= C_{\Sigma}(G,S)$ satisfies $\mathbf{\Gamma}^{2} = C(G,S^{-1}S) $ i.e., $\mathbf{\Gamma}^{2}$ is actually the undirected Cayley graph of $G$ with respect to the symmetric generating set $S^{-1}S$. We have a criterion for Cayley graphs to be flexible or movable. Thus any information about Open Question \ref{Q2} will give us information about Open Question \ref{Q3}.

\providecommand{\bysame}{\leavevmode\hbox to3em{\hrulefill}\thinspace}
\providecommand{\MR}{\relax\ifhmode\unskip\space\fi MR }
\providecommand{\MRhref}[2]{%
	\href{http://www.ams.org/mathscinet-getitem?mr=#1}{#2}
}
\providecommand{\href}[2]{#2}


\end{document}